\def\E{\mathbb{E}}
\def\Int{\mathrm{Int}}
\newcommand{\abs}[1]{\left\vert#1\right\vert}
\def\R{\mathbb{R}}
\def\Z{\mathbb{Z}}
\def\eps{\varepsilon}
\newcommand{\norm}[1]{\|#1\|}
\def\1{\mathbf{1}}
\def\lam {\lambda}
\def\Lam{\Lambda}
\def\tce{t_c + \eps}
\def\tce2{t_c + \frac{\eps}{2}}
\newcommand{\ob}[1]{\left(#1\right )} %% Open braces
\newcommand{\cb}[1]{\left[#1\right ]} %% Closed braces
\newcommand{\set}[1]{\left\{#1\right\}} %% Pair of curly braces
\newcommand{\ve}{\varepsilon}
\numberwithin{equation}{section}
\theoremstyle{plain}
\newtheorem{theorem}{Theorem}[section]
\newtheorem{lemma}{Lemma}[section]
\newtheorem{defn}{Definition}[section]
\newtheorem{remark}{Remark}[section]
\begin{document}

\title[Correlation decay for hard spheres]{Correlation decay for hard spheres via Markov chains}

\date{\today}

\author{Tyler Helmuth}
\address{Durham University}
\email{tyler.helmuth@durham.ac.uk}
\author{Will Perkins}
\address{University of Illinois at Chicago}
\email{math@willperkins.org}
\author{Samantha Petti}
\address{Harvard University}
\email{spetti@fas.harvard.edu}

\maketitle

\begin{abstract}
 We improve upon all known lower bounds on the critical fugacity and
  critical density of the hard sphere model in dimensions two and
  higher.  As the dimension tends to infinity our improvements are by
  factors of $2$ and $1.7$, respectively.  We make these improvements
  by utilizing techniques from theoretical computer science to show
  that a certain Markov chain for sampling from the hard sphere model
  mixes rapidly at low enough fugacities.  We then prove an
  equivalence between optimal spatial and temporal mixing for hard
  spheres to deduce our results.
  \end{abstract}

\section{Introduction}
\label{secIntro}

For a fixed radius $r>0$, the \emph{hard sphere model} in a volume
$\Lambda\subset \R^{d}$ at fugacity
$\lam \ge0$ is a random point process $\mathbf X$ defined by conditioning a Poisson point process of
intensity $\lam$ on $\Lam$ on the event that the points are at
pairwise distance at least $2r$, or in other words, conditioning on
the event that the points $\mathbf X$ are the centers of a packing of
spheres of radius $r$. 
Conditioned on the number $k$ of centers, the distribution is uniform
over all sphere packings of $\Lam$ with $k$ spheres.   Note that by a  `sphere packing' we simply mean a configuration of non-overlapping spheres, not a `close packing' which implies maximality.

The hard sphere model is a simple but fundamental model of monatomic
gases. Its theoretical importance is in part due to the fact that it
conjecturally possesses a crystalline phase~\cite{alder1960studies,Blanc,hoover1968melting}.
Understanding the phase diagram of the model has presented a
significant challenge even at the level of physics~\cite{Krauth}, and
mathematical results are almost exclusively
restricted to understanding the low-density (small $\lam$) phase
(see~\cite{Richthammer} for a notable exception).  In particular, it
is an open mathematical problem to prove the existence of a phase
transition in the hard sphere model.  Not only is the model the most studied example of a broad class of Gibbs point processes, but it has played a starring
role in the development of Markov chain Monte Carlo methods since the
beginning: the Metropolis algorithm was first applied to the study of
the two-dimensional hard sphere
model~\cite{metropolis1953equation}. 

We will give a more precise definition of the hard sphere model below,
but for now we restrict our attention to aspects of the problem
directly relevant to our results. 
The reader unfamiliar with the model
may find~\cite{lowen2000fun} to be an inspiring introduction and
broader overview. Without loss of generality, it will be convenient to
choose the radius $r=r_{d}$ such that each sphere has
volume one and $\abs{\mathbf{X}}$, the number of spheres in the random packing, is also the volume covered by the packing.

The \emph{critical fugacity} $\lam_c(d)$ in dimension $d$ is the supremum
over $\lam$ such that the hard sphere model has a unique infinite
volume limit in the sense of van Hove, i.e., such that the set of weak
limit points of $\{\mu_{\Lam,\lam}\}_{\Lam}$ as $\Lam \to \R^d$ is a singleton set.  If $\lam_c(d) < \infty$, then the hard sphere model exhibits a \emph{phase transition} at fugacity $\lam_c$.   When
$d=1$, $\lambda_{c}(d)=\infty$, but it is not known in any higher dimension
whether or not $\lam_c(d)< \infty$. It is believed that
$\lambda_{c}(d)$ is finite in dimension $3$ (and in some or all
dimensions $d\geq 4$), while the case $d=2$ is subtle and remains an
active area of investigation even in the physics
literature~\cite{Krauth,thorneywork2017two}.

Proving a lower bound on $\lam_c(d)$ amounts to proving the absence of
a phase transition for $\lam$ in an interval on the real line.
Developing new and more powerful methods for proving the absence of a
phase transition has been a central theme of statistical mechanics
(e.g. \cite{yang1952statistical,dobrushin1970definition,van1993uniqueness,BrydgesSC}),
not only due to the interest in understanding phase diagrams but also
because of a broad equivalence between the absence of phase
transitions and important probabilistic and dynamical properties of
finite and infinite systems~\cite{dobrushin1985completely}.  In
particular, the absence of a phase transition is related to mixing
properties of natural dynamics to sample from finite volume Gibbs
measures, i.e., 
the thermodynamics properties
are connected to the performance of natural algorithms to sample from
finite volume systems.  While this connection has been made
  precise in the setting of lattice
  systems~\cite{stroock1992logarithmic,dyer2004mixing,weitz2005combinatorial},
  it has only been shown under more restrictive conditions in the setting
  of Gibbs point processes like the hard sphere model (see the
  discussion below in Section~\ref{sec:prev}).

  Our main result is an improved lower bound on the critical fugacity
  $\lam_c(d)$ that we prove by developing an unrestricted equivalence
  between finite volume Markov-chain mixing properties and 
  correlation decay properties.

\begin{theorem}
\label{thmGenThm}
For all $d \ge 2$,  $\lam_c(d) \ge 2^{1-d}$.
\end{theorem}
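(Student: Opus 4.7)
The plan is to route the entire argument through finite-volume Markov chain mixing. Concretely, fix a bounded box $\Lam\subset\R^d$ and define a continuous-time heat-bath dynamics on sphere configurations in $\Lam$: each location $v\in\Lam$ carries an independent rate-one Poisson clock, and when it rings we remove any sphere whose center lies in a neighborhood of $v$ and then, with probability $\lam/(1+\lam)$, attempt to place a new sphere at $v$, accepting if this violates no hard-core constraint. The stationary distribution is the finite-volume hard sphere measure $\mu_{\Lam,\lam}$.

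The first core step is rapid mixing of this chain whenever $\lam<2^{1-d}$, via a path-coupling argument on an appropriate earth-mover-style metric between configurations. Given two configurations $X,Y$ differing only in the presence of a single sphere centered at $u$, and shared resampling randomness at a site $v$, the coupling fails to immediately merge only if $v$ lies in the excluded ball $B(u,2r)$, which has volume $2^d$ by our normalization. Accounting for the rate at which disagreements are created versus destroyed along an infinitesimal coupling step yields a strict contraction precisely when $\lam\cdot 2^{d}<2$, i.e.\ when $\lam<2^{1-d}$. Integrating the contraction and applying a standard path-coupling-to-mixing conversion gives optimal $O(\abs{\Lam})$ mixing in total variation.

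The second and conceptually harder step is the transfer from this temporal mixing back to strong \emph{spatial} mixing: decay of correlations that is uniform in $\Lam$ and in all boundary conditions. For lattice systems such an equivalence is classical (Stroock--Zegarlinski, Dyer--Sinclair--Vigoda, Weitz), but the cited previous point-process results require restrictive assumptions, and the whole point of the paper is an unrestricted version. The plan here is to compare the chain run from two arbitrary boundary conditions using the same internal randomness, use the locality of the heat-bath updates to show that the influence of a boundary disagreement on a local observable is controlled by the probability that a disagreement propagates from the boundary to the observable region within the mixing time, and then exploit the $O(\abs\Lam)$ mixing bound from the first step to convert this into an exponentially decaying spatial influence. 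Making this propagation-of-disagreements bookkeeping precise in the continuum, without the convenience of a finite state space or block dynamics, is where I expect the main technical obstacle.

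With strong spatial mixing in hand for every $\lam<2^{1-d}$, uniqueness of the infinite-volume Gibbs measure is standard: any two van Hove limits must assign the same value to every local observable, since their finite-volume approximations under arbitrary boundary conditions are exponentially close. Therefore $\lam_c(d)\ge 2^{1-d}$ for every $d\ge 2$, which is \Cref{thmGenThm}.
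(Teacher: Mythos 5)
Your overall architecture — finite-volume Markov chain mixing, then a continuum temporal-to-spatial transfer, then uniqueness from strong spatial mixing — matches the paper exactly, and the last two steps are sketched in a way consistent with the paper's Theorem~\ref{thmSingleTempToSpatial} (disagreement propagation bookkeeping for the identity coupling plus projected optimal mixing) and the standard SSM-implies-uniqueness fact. The gap is in the first step, and it is exactly where the number $2^{1-d}$ comes from.

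Your contraction computation as written does not give $\lam\cdot 2^d<2$. If you couple identically and track Hamming distance, a disagreement at $u$ is destroyed at rate proportional to $|B_r(u)|\cdot\tfrac{1}{1+\lam}=\tfrac{1}{1+\lam}$ (the removal move must land within $r$ of $u$), while a new disagreement can be created by an attempted addition landing anywhere in $B_{2r}(u)$, at rate at most $\tfrac{\lam}{1+\lam}\cdot 2^d$. Balancing these gives contraction only when $\lam\cdot 2^d<1$, i.e.\ $\lam<2^{-d}$, which is a factor of two short of the theorem. You gesture at ``an appropriate earth-mover-style metric,'' but you never specify it, and the accounting you actually perform is the naive Hamming one. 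The factor-of-two improvement is precisely the non-trivial content: the paper adapts Vigoda's optimized metric for the hard-core model, setting $\hat D(X,X\cup\{v\})=2^d-c\,|O_X(v)|$ with $c=\lam 2^d/(2+\lam 2^d)$, where $O_X(v)$ is the part of $B_{2r}(v)$ already blocked. The point is that when a new disagreement is created at a point $w\in B_{2r}(u)$, the two excluded balls $B_{2r}(w)$ and $B_{2r}(u)$ overlap substantially, so the new disagreement is cheap in the weighted metric; this is what buys the factor of $2$. Moreover, to make $\hat D$ a genuine pre-metric one must enlarge the state space to $\Omega^*$ (configurations where every point of $\Lam$ is covered at most twice), since the triangle inequality step requires passing through $X\cup\{v,w\}$, which is generally not a legal packing. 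Without specifying both the metric and this extended state space, the claimed threshold $\lam<2^{1-d}$ is not established, and your argument only proves the weaker bound $\lam_c(d)\geq 2^{-d}$.

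Two smaller points: your ``rate-one Poisson clock at each location $v\in\Lam$'' needs to be phrased as a space-time Poisson process (there are uncountably many locations), though this is cosmetic; and what you describe is the paper's single-center dynamics, not what the paper calls heat-bath dynamics (resampling a whole ball), so the terminology should be kept straight since the paper's converse direction (Theorem~\ref{thm:heatbathConverse}) genuinely needs heat-bath updates with large radius.
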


The \emph{density} of the hard sphere model on $\R^{d}$
at fugacity $\lam$ is
\begin{equation}
  \label{eqDensity}
  \rho(\lam) = \liminf_{n \to \infty}  \frac{1}{n} \E_{Q_n, \lam} |\mathbf X |\,,
\end{equation}
where $Q_n$ is the $d$-dimensional cube of volume $n$ centered at the
origin, and the expectation is with respect to the hard sphere model on
$Q_n$ at fugacity $\lam$.  The use of liminf in \eqref{eqDensity} is
necessary as \emph{a priori} the limit is only known to exist for
Lebesgue-a.e.\ value of $\lambda$.  The \emph{critical density} $\rho_c(d)$ of the hard sphere model is
$\rho(\lam_c(d))$ (or $\lim_{\lam \to \infty} \rho(\lam)$ if
$\lam_c = \infty$).  That is, $\rho_{c}(d)$ is the limiting expected
packing density of the hard sphere model at the critical fugacity
$\lam_c(d)$.  By making use of Theorem~\ref{thmGenThm} we can obtain
an improved lower bound on the critical density.

\begin{theorem}
  \label{thmDensityBounds}
  For all $d \ge 2$, $\rho_c(d) \ge \frac{2}{3 \cdot 2^d}$.  As the
  dimension $d$ tends to infinity 
  we have $\rho_c(d) \ge (.8526 + o_d(1))2^{-d}$.
\end{theorem}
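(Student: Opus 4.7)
The plan is to combine the fugacity bound $\lam_c(d) \ge 2^{1-d}$ from Theorem~\ref{thmGenThm} with a lower bound on $\rho(\lam)$ that holds throughout the uniqueness region. Since $\rho(\lam)$ is non-decreasing in $\lam$ (by stochastic domination of the finite-volume Gibbs measures in $\lam$) and $\rho_c(d) = \rho(\lam_c(d))$, it suffices to lower bound $\rho(\lam)$ for $\lam < \lam_c(d)$ and then send $\lam \uparrow 2^{1-d}$.

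The central identity is the Georgii--Nguyen--Zessin formula for the hard sphere model,
\begin{equation*}
\rho(\lam) = \lam \cdot \pr_{\mu_{\lam}}\!\bigl(\mathbf{X} \cap B(0, 2r) = \emptyset\bigr),
\end{equation*}
which holds because the Papangelou conditional intensity of the hard sphere model at a point $x$ equals $\lam$ when $x$ is at distance $\ge 2r$ from every point in the configuration and is $0$ otherwise; integrating against the stationary measure gives the identity. Under our normalization $\vol B(0,r) = 1$, we have $\vol B(0, 2r) = 2^d$. For the first bound $\rho_c(d) \ge 2/(3 \cdot 2^d)$, I would apply Markov's inequality to $N := |\mathbf{X} \cap B(0,2r)|$, whose mean is $\rho(\lam)\cdot 2^d$ by stationarity: $\pr(N = 0) \ge 1 - \rho(\lam)\cdot 2^d$. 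Combining with GNZ yields the self-consistent inequality $\rho(\lam) \ge \lam/(1 + \lam\cdot 2^d)$, and evaluating at $\lam = 2^{1-d}$ gives exactly $2/(3\cdot 2^d)$.

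For the improved asymptotic bound, the target is to upgrade the Markov step above to an exponential-type bound $\pr(N = 0) \ge e^{-\rho(\lam)\cdot 2^d - o_d(1)}$. Once this is in hand, GNZ reads $\rho(\lam) \ge \lam\, e^{-\rho(\lam)\cdot 2^d - o_d(1)}$; setting $u = \rho(\lam)\cdot 2^d$ and $\lam = 2^{1-d}$ turns the inequality into $u\, e^{u} \ge 2 - o_d(1)$, so $u \ge W(2) + o_d(1) \approx 0.8526 + o_d(1)$, where $W$ is the Lambert $W$ function.

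The main obstacle is establishing this exponential bound on the void probability. Two routes suggest themselves. The first is a negative-dependence argument: partition $B(0,2r)$ into many small cells $C_i$ of volume $\eps$ and show that the events $\{\mathbf{X} \cap C_i = \emptyset\}$ are negatively associated under $\mu_{\lam}$, so that
\begin{equation*}
\pr\!\bigl(\mathbf{X} \cap B(0,2r) = \emptyset\bigr) \;\ge\; \prod_i \pr\!\bigl(\mathbf{X} \cap C_i = \emptyset\bigr) \;\ge\; \prod_i (1 - \rho(\lam)\eps) \;\longrightarrow\; e^{-\rho(\lam)\cdot 2^d}
\end{equation*}
as $\eps \to 0$. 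Such negative dependence for the continuous hard sphere process is not fully standard and this is where I expect the technical fight; an alternative route, specifically suited to the $d \to \infty$ asymptotic, is a high-dimensional Poisson-approximation argument. At $\lam = 2^{1-d}$ the effective density $\rho\cdot 2^d$ is $O(1)$, while in high dimensions the higher-order factorial moment densities $\rho^{(k)}$ of the hard sphere model at such low intensities are expected to be well-approximated by $\rho^k$ with $o_d(1)$ multiplicative corrections; a convergent cluster expansion in this regime (which one should be able to run using the spatial-mixing output of Theorem~\ref{thmGenThm}) then yields the required exponential void-probability bound. Either route reduces the asymptotic density statement to the Lambert-$W$ self-consistent inequality and hence to the claimed constant.
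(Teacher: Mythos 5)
Your first bound is correct and is essentially the paper's Lemma~\ref{lemEasybound} in infinite-volume dress: the GNZ identity $\rho(\lam)=\lam\cdot\pr(\mathbf X\cap B(0,2r)=\emptyset)$ is the stationary form of the finite-volume free-volume identity $\rho_\Lam=\lam F_\Lam$ from \cite{jenssen2019hard}, and your Markov-inequality step ``$\pr(N=0)\ge 1-\E[N]$'' is the same observation as ``each center blocks volume at most $2^d$.'' One small caveat: monotonicity of $\rho(\lam)$ in $\lam$ is true, but your justification by stochastic domination is not sound for hard spheres (FKG fails for this repulsive model); the standard route is that $\partial\rho_\Lam/\partial\log\lam$ is a variance, hence nonnegative, and one passes to the limit.

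The asymptotic bound is where your proposal has a genuine gap. You correctly identify the shape of the argument: upgrade the linear bound on the void probability to an exponential one, $\pr(N=0)\ge e^{-\rho(\lam)2^d-o_d(1)}$, so that with $u=\rho(\lam)2^d$ and $\lam=2^{1-d}$ the self-consistent inequality becomes $u e^{u}\ge 2-o_d(1)$ and hence $u\ge W(2)+o_d(1)$. But neither mechanism you propose for the exponential step is available here. Negative association of the hard-sphere void events is, as you yourself flag, not a theorem; to my knowledge it is not in the literature and is likely false or very hard. The cluster-expansion alternative cannot be run at this fugacity: the cluster expansion has radius of convergence at most $2^{-d}$ (see the remark citing \cite{FernandezNguyen} in Section~\ref{sec:prev}), while the relevant fugacity is $2^{1-d}=2\cdot 2^{-d}$, strictly outside the disk of convergence --- indeed the entire point of the paper is to prove results in this regime without the cluster expansion, and the ``Future directions'' section explicitly notes that the cluster expansion is unavailable for the density as well. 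The paper closes the gap by invoking \cite[Theorem~2]{jenssen2019hard}, quoted here as Theorem~\ref{thmJJP}, whose proof gives $\rho(c2^{-d})\ge (1+o_d(1))W(c)2^{-d}$ by a direct combinatorial free-volume argument that uses neither negative association nor a convergent series. To complete your proof you must either invoke that theorem or supply an independent proof of the exponential void-probability bound valid at $\lam=2^{1-d}$; as written, both of your suggested routes fail.
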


Before discussing how these results improve upon all previously
  known bounds, 
we briefly outline the proof of Theorems~\ref{thmGenThm}
and~\ref{thmDensityBounds}. At a high level this is done by adapting
and combining three ingredients from the study of algorithms,
probability theory, and combinatorics:
\begin{enumerate}
\item We analyze a Markov chain for sampling hard sphere
  configurations in a finite volume.  By using techniques from
  theoretical computer science, namely path coupling with an optimized
  metric (e.g.~\cite{vigoda2001note}), we show that this Markov chain
  mixes rapidly at small enough fugacity. The conclusion is
  Theorem~\ref{mixing} below. Our main contribution here is to
  rigorously implement the idea that the hard sphere model behaves
  like a hard-core lattice gas on a finite graph with many triangles.
\item We establish a continuous analogue of the equivalence of spatial
  and temporal mixing from lattice spin systems
  (e.g.~\cite{stroock1992logarithmic,dyer2004mixing}) to deduce
  exponential decay of correlations 
  from our fast mixing
  results.  For applications to bounds on the critical fugacity of the hard sphere model
    our main result here is Theorem~\ref{thmSingleTempToSpatial}. We
    also prove a full equivalence between spatial and temporal mixing,
    see Theorems~\ref{thmHeatBathTemptoSpatial}
    and~\ref{thm:heatbathConverse}. Our main contribution here
      is to prove an equivalence result for hard spheres that does not
      rely on the convergence of a cluster expansion; the importance
      of this is discussed below in Section~\ref{sec:prev}.
\item We achieve the bounds on the critical density $\rho_{c}$ in
  Theorem~\ref{thmDensityBounds} by applying non-trivial lower bounds
  on the expected packing density of the hard sphere
  model~\cite{jenssen2019hard}.
\end{enumerate}
After the presentation of some preliminaries in
  Section~\ref{sec:hard-spheres-with}, we outline the first two of
  these steps in more detail in Sections \ref{sec:SSM-I} and
  \ref{sec:OTM-I}. Before this we compare our results
  with those previously obtained in the literature.

\subsection{Previous results}
\label{sec:prev}
Historically, the main approach  
to proving the absence of a phase transition at low densities in the
hard sphere model is to use the cluster expansion. This is a
  convergent power series expansion for the pressure of the hard
  sphere gas.  The classical bound states that the cluster expansion converges for
all complex $\lam$ with $|\lam| \le e^{-1}2^{-d}$, and thus
$\lam_c(d) > e^{-1} 2^{-d}$.  For a given dimension it may be
  possible to improve upon this result, e.g., in two dimensions, 
Fern\'andez, Procacci, and Scoppola~\cite{fernandez2007analyticity},
proved the cluster expansion converges for
$|\lam| \le .1277$.  However, one does not expect to be able to
improve the constant $e^{-1}$ as the dimension $d$ tends to infinity: the singularity determining the radius of convergence of the cluster expansion is known
  to lie on the negative real axis, and there is a compelling (but
  non-rigorous) argument that this singularity is at
  $-e^{-1}2^{-d}(1+o_d(1))$, see~\cite{frisch1985classical}. 
    It is  known rigorously that the cluster expansion cannot
    have a radius of convergence greater than $2^{-d}$, see~\cite[Remark~3.7]{FernandezNguyen}.

To avoid the negative axis singularity a natural idea is to
  use techniques that do not require analyticity, e.g., 
  probabilistic techniques which concern only positive fugacities
  $\lam$. One approach in this direction was taken by
Hofer-Temmel~\cite{christoph2019disagreement}, who used disagreement
percolation~\cite{van1993uniqueness} and known bounds on the critical
intensity of $d$-dimensional Poisson--Boolean percolation to prove lower
bounds on the critical fugacity of the hard sphere model.  In
dimension $2$, his 
bound is $\lam_c(2) > .1367$. Hofer-Temmel's method and a bound based
on the non-rigorous `high-confidence' results
of~\cite{balister2005continuum} for Poisson-Boolean percolation gives
$\lam_c(2) >.28175$~\cite{christoph2019disagreement}.  The asymptotics
of the critical intensity of Poisson--Boolean percolation as
$d\to\infty$ are known, and this gives a bound of
$\lam_c(d) \ge (1+o(1))2^{-d}$, improving upon the cluster expansion
bound by a factor $e$.  Our probabilistic approach makes use
  of Markov chain mixing times, a well-honed tool, instead of
  disagreement percolation. For comparison, 
our bound $\lam_{c}(d)\geq 2^{1-d}$ is an
improvement of a factor $2$ as $d\to\infty$, and of more than $3$
compared to the rigorous results in dimension $2$.

Finally, there has been work on
developing exact sampling algorithms for the hard sphere model. 
Guo and Jerrum~\cite{jerrum2019perfect} showed that a partial rejection sampling 
algorithm is efficient in dimension $2$ for $\lam \le .21027$ and
Wellens improved this bound to
$\lam \le .2344$~\cite{wellens2018note}. For an approach combining
coupling from the past and rejection sampling, see~\cite{Wilson}. 

While it appears we are the first to use Markov chains to
  estimate the critical fugacity of the hard sphere model, there
have been previous works that obtain bounds on the critical density by
showing that certain Markov chains for sampling a configuration of
hard spheres mix rapidly. To lower bound the critical density these
chains make use of the canonical ensemble, meaning the configurations
consist of a fixed finite number of spheres in a finite volume.
Results of this type include Kannan, Mahoney, and Montenegro who
showed that a simple Markov chain for the canonical ensemble exhibits
rapid mixing for densities $\rho < 2^{-1-d}$~\cite{kannan2003rapid},
and Hayes and Moore who used an optimized metric to show that in
dimension $2$ this same Markov chain mixes rapidly at densities
$\rho < .154$~\cite{hayes2014lower}. The Markov chain studied in these
papers 
moves spheres in a non-local way. Dynamics involving only local moves
have been investigated by Diaconis, Lebeau and Michel as an
application of a more general geometric
framework~\cite{DiaconisLebeauMichel}; these local dynamics are
restricted to vanishing densities due to the existence of jammed
configurations of arbitrarily low density, see~\cite{Kahle}.

To convert bounds on the critical fugacity
to bounds 
on the critical density, we use the  
bound $\rho(\lam) > \frac{\lam}{1 + 2^d \lam}$ where $\rho(\lam)$ is the
packing density of the hard sphere model at fugacity $\lam$ (see
Section~\ref{secDensity}).  Surprisingly, the estimates on $\rho_c$ that result
  from combining this estimate with  the high-dimensional bounds
from~\cite{christoph2019disagreement}
coincide with the high-dimensional bounds on $\rho_c$
  from~\cite{kannan2003rapid}.  Our results improve upon this by a factor of $4/3$. Our
  stronger result as $d\to\infty$ in Theorem~\ref{thmDensityBounds},
  which is an improvement of roughly $1.7$, makes use of a better
  estimate for $\rho(\lam)$, see Section~\ref{secDensity}. Our results are
  also the best known in $d=2$.

As mentioned above, 
our argument showing that rapid mixing implies exponential decay of
correlations is based on the argument given in~\cite{dyer2004mixing}
for discrete spin systems on graphs. Previously there has been work
relating mixing times and correlation decay for continuous-time
birth-death chains for continuum particle systems with soft two-body
potentials~\cite{BertiniCancriniCesi}. Later works allowed for hard
core potentials~\cite{Wu,BoudouEtAl}, but  all of these results apply only in the low
density regime, i.e., within the domain of convergence of the cluster
expansion. For the reasons discussed above, it is essential for
  us to have a result that does not rely on the convergence of the
  cluster expansion. We achieve this by using combinatorial techniques in our proof of equivalence to avoid 
a low density hypothesis.

\subsection{Future directions}
\label{sec:future-directions}

One interesting benchmark for further progress on determining the
uniqueness phase of the hard sphere model would be to obtain
uniqueness for all $\rho\leq 2^{-d}$, the point at which the system no
longer trivially (by a union bound) contains free volume. Passing this
threshold appears to require new ideas. Another tool from computer
science that may be applicable to the hard sphere model is Weitz's
correlation decay method~\cite{Weitz}, although some adaptation will
be necessary to deal with the continuous nature of space for the hard
sphere model.\footnote{Subsequent to the posting of this article to
  the arXiv, one possible adaptation of Weitz's method to continuum particle
  systems has been found~\cite{michelen2020analyticity}.} 

 While it should be straightforward to generalize our methods to
    more hard shapes that are not spheres, the adaptation to more
    general (soft) two-body potentials is less clear. A generalization
  that covers stable two-body potentials (the setting of, e.g.,~\cite{penrose1963convergence,ruelle1963correlation}) would be interesting.

It is also worth remarking that the step in our arguments of obtaining
density bounds from fugacity 
bounds is a challenging and interesting problem in itself. At very low fugacities one can use the cluster
expansion to write a convergent power series formula for the
density. Since, however, we are interested in going beyond the radius
of convergence of the cluster expansion, this tool is not
available. Developing alternative approaches to estimating the
density, whether by analytic continuation, direct study of the Virial
series, or other means, would be very interesting.

Lastly, there is of course the long-standing open problem of
proving the existence of a phase transition for the hard sphere model.

\section{Spatial and temporal mixing}

In this section we define the hard sphere model with boundary
conditions, define the notions of strong spatial mixing and optimal
temporal mixing, and reformulate our main results in terms of
  these notions.

\subsection{Hard spheres with boundary conditions}
\label{sec:hard-spheres-with}
We begin by formally defining the hard sphere model in a bounded
measurable volume $\Lam \subset \R^d$.  Recall that we write
$r = r_{d}$ for the radius of a sphere of volume $1$ in $\R^d$. Let
$$\Lam_{\Int}= \{ x \in \Lam :\mathrm{dist}(x,\Lam^{c}) \ge r \}.$$  
The hard sphere model on volume
$\Lam$ at fugacity $\lam \ge 0$ with \textit{free boundary conditions} is a
Poisson point process of intensity $\lam$ on $\Lam_{\Int}$ conditioned
on the event that all points are at pairwise distance at least $2r$.
In words, the hard sphere model arises by conditioning on the event
that the points form the centers of a sphere packing in $\Lam$ with
spheres of volume $1$; we recall a sphere packing in a set $A$ is any
collection of pairwise disjoint open spheres that are entirely
contained in $A$.  Explicitly, the normalizing constant $Z_{\Lam}(\lam)$ is
\begin{equation}
  \label{eq:Z}
  Z_{\Lam}(\lam) = \sum_{k\geq 0} \int_{\Lam_{\Int}^{k}}
  \frac{\lam^{k}}{k!} \prod_{1\leq i<j\leq k} 1_{\norm{x_{i}-x_{j}}\geq
    2r} \prod_{i=1}^{k} dx_{i}
\end{equation}
where $dx_{i}$ is Lebesgue measure. We will denote the law of
$\mathbf X$ by $\mu_\Lam$ (the dependence on $\lam$ will be
suppressed).  The density of $\mu_\Lam$ on  $\Lam_{\Int}^k$ with respect to Lebesgue measure is given by the integrand
of~\eqref{eq:Z} divided by the partition function   $Z_{\Lam}(\lam)$.  Note that the requirement that spheres lie entirely
within $\Lam$ instead of just requiring the centers to lie in $\Lam$
makes no difference in the infinite volume limit, but it does have a
regularizing effect in finite volume.

We will also be interested in the hard sphere model with boundary
conditions $\tau$. More precisely, we define
$\tau \subseteq \Lam_{\Int}$ as a set of forbidden locations for
centers. The hard sphere
model on a volume $\Lam$ at fugacity $\lambda \ge 0$ with boundary conditions
$\tau$ is a Poisson point process of intensity $\lambda$ on
$\Lam_{\Int} \setminus \tau$ conditioned on the event that all points are at pairwise distance
at least $2r$. One possibility is that $\tau$ represents the volume
blocked by a set of permanently fixed spheres: if $Y$ is a set of
centers and
$\tau = \Lam_{\Int} \cap \left(\cup_{y \in Y} B_{2r}(y)\right)$, then
$\Lam_{\Int} \setminus \tau$ is the set of locations for centers that
do not overlap with spheres defined by the centers in $Y$. Note $\tau$
need not have this form. The law of the hard sphere model on $\Lam$
with boundary condition $\tau$ will be denoted by $\mu_\Lam^\tau$.

\subsection{Strong spatial mixing}
\label{sec:SSM-I}

Let $\Omega_{\Lam}$ be the set of all 
\emph{configurations} for the hard sphere model on $\Lam$, that is,
the set of all finite point sets in $\Lam_{\Int}$ whose pairwise
distance is at least $2r$. Similarly, let $\Omega_\Lam^\tau$ be the set of
configurations for the hard sphere model on $\Lam$ with boundary
conditions $\tau$.  In particular,
$\Omega_{\Lam } = \Omega_\Lam^\emptyset$.

For two probability measures $\mu_1$ and $\mu_2$ on $\Omega_{\Lam}$ we
let $\| \mu_{1}-\mu_{2}\| = \| \mu_1- \mu_2\|_{TV}$ denote their total
variation distance. For $\Lam' \subseteq \Lam$ probability measures on
  $\Omega_{\Lam}$ induce probability measures on $\Omega_{\Lam'}$ by retaining only the points in $\Lam'$. To control the resulting measures, we let
$\| \mu_1 -\mu_2\|_{\Lam'}$ denote the total variation distance
between the pushforward of $\mu_1$ and $\mu_2$ to measures on configurations
in $\Lam'$ under the projection map from $\Lam$ to $\Lam'$. 
 In particular, if $|\Lam'| <1$, then the only valid configuration is the
empty set of centers and so $\|\mu_1 - \mu_2\|_{\Lam'} =0$.  For  $\Lam \subset \R^d$ we denote its volume by $|\Lam|$.

We can now define the strong spatial mixing property.

\begin{defn}
  The hard sphere model at fugacity $\lam$ exhibits \emph{strong
    spatial mixing} (SSM) on $\R^d$ if there exist $\alpha, \beta >0$
  such that for all compact measurable 
  $\Lam' \subseteq \Lam \subset \R^d$ and any pair of boundary
  conditions $\tau$ and $\tau'$,
  \begin{equation} 
    \norm{\mu_{\Lam}^{\tau} - \mu_{\Lam}^{\tau'}}_{\Lam'}
    \le \beta \abs{\Lam'} \exp ( - \alpha \cdot \mathrm{dist}(\tau
    \triangle \tau', \Lam')).
  \end{equation}
\end{defn}

We define the strong spatial mixing threshold
on $\R^d$ as
 \begin{equation}
\lam_{\mathrm{SSM}}(d) = \sup \{ \lam : \text{ SSM holds for } \lam' < \lam \} \,.  \label{eq:SSM}
\end{equation}
It is well-known that a much weaker spatial mixing condition implies uniqueness of infinite volume
Gibbs measures (e.g.~\cite{ruelle1999statistical,dobrushin1985completely}), and so $\lam_c (d) \ge \lam_{\mathrm{SSM}}(d)$.  The
inequality can in principle be strict; for example, it is expected
that  $\lam_{\mathrm{SSM}}(2) < \lam_c(2)$.

\subsection{Optimal temporal mixing}
\label{sec:OTM-I}

Consider the following Markov chain on $\Omega_\Lam^\tau$, called the
\emph{single-center dynamics}.  Given a configuration
$X_t \in \Omega_\Lam^\tau$, form $X_{t+1}$ as follows:
\begin{enumerate}
  \item Pick $x \in \Lam$ uniformly at random.
  \item With probability $1/ (1+\lambda)$, remove any $y \in X_t$ with
    $\mathrm{dist}(x,y)< r$; that is, let
    $X_{t+1} = X_t \setminus B_{r}(x)$.
  \item With probability $\lambda/ (1+\lambda)$, attempt to add a
    center at $x$.  That is, let $X' = X_t\cup \{x\}$. If
    $X' \in \Omega_\Lam^\tau$, then set $X_{t+1} = X'$; if not, then
    set $X_{t+1}= X_t$.
\end{enumerate}
We show in Lemma~\ref{lem:stat} below that the stationary distribution of this Markov chain is indeed $\mu_{\Lam}^{\tau}$.

Following~\cite{dyer2004mixing}, our notion of optimal temporal mixing for Markov
chains in the next definition is essentially  
$O(n \log n)$ mixing for all regions $\Lam$ of volume $n$ and all boundary conditions.

\begin{defn}
  Let $n = |\Lam|$.  The single-center dynamics for the hard sphere
  model on $\R^d$ has \emph{optimal temporal mixing} at fugacity
  $\lam$ if there exist $b,c >0$ so that for any compact measurable
  $\Lam \subset \R^d$, any boundary condition $\tau$, any $s>0$, and
  any two instances $(X_{t})$ and $(Y_{t})$ of the single-center
  dynamics on $\Omega_\Lam^\tau$,
  \begin{equation}
  \label{eqOptTempDef}
    \norm{X_{\lfloor sn \rfloor} - Y_{\lfloor s n \rfloor }}_{TV} \le b n e^{- cs},
  \end{equation}
  where, by an abuse of notation, the left-hand side means the total variation
    distance between the laws of $X_{\lfloor sn\rfloor}$ and
    $Y_{\lfloor sn \rfloor}$.
\end{defn}

\subsection{New results}
\label{sec:NR}

Using the technique of coupling with an optimized metric from Vigoda's
work on the discrete hard-core model on bounded-degree
graphs~\cite{vigoda2001note}, we establish optimal temporal mixing of
the single-center dynamics for fugacities $\lam < 2^{1-d}$.

\begin{theorem}
	\label{thmMCmixing}
	For all $d \ge 2$ and all $\lam < 2^{1-d}$, the single-center
        dynamics for the hard sphere model on $\R^d$ exhibits optimal
        temporal mixing.
\end{theorem}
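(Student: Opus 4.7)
The plan is to apply path coupling with an optimized weighted metric, following the general strategy of Vigoda's work on the discrete hard-core model~\cite{vigoda2001note} adapted to the continuous hard-sphere setting. I would couple two copies $(X_t)$, $(Y_t)$ of the single-center dynamics using a common proposal point $x \in \Lam$ and common add/remove decision, and track a weighted distance $\phi$. For an elementary pair $X_0 = Y_0 \cup \{z\}$, the remove phase coalesces the chains with probability $|B_r(z)|/n = 1/n$ (namely when $x \in B_r(z)$, so $X$ removes $z$ while $Y$ is unaffected). The add phase creates a ``bad event'' with probability at most $|B_{2r}(z)|/n = 2^d/n$: specifically when $x \in B_{2r}(z)$ is addable to $Y$ but not to $X$, yielding new configurations $Y_0 \cup \{z\}$ and $Y_0 \cup \{x\}$ that differ by two points at distance less than $2r$. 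With the naive Hamming distance $|X \triangle Y|$, this only yields contraction for $\lam < 2^{-d}$, a factor of two short of the target.

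To gain the factor of two, I would define $\phi(X,Y)$ as the minimum cost over matchings of $X\triangle Y$, where a matched ``close pair'' (two points at distance less than $2r$) costs $\gamma<2$ and an unmatched point costs $1$. On the bad add event, $\phi$ then increases from $1$ to $\gamma$ rather than from $1$ to $2$, giving the elementary-pair drift bound
\begin{equation*}
\E[\Delta \phi] \le \frac{1}{(1+\lam)n}\bigl(-1 + (\gamma - 1)\cdot 2^d \lam\bigr),
\end{equation*}
which is negative for $\lam < 1/((\gamma-1)\cdot 2^d)$; taking $\gamma = 3/2$ reaches the desired threshold $\lam < 2^{1-d}$. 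The value $\gamma < 2$ is admissible only if the drift from paired-difference states is also contracting; there the overlap $V := |B_r(z_1)\cap B_r(z_2)| > 0$ between the two removal balls provides an extra coalescing contribution of $-\gamma V /((1+\lam)n)$, serving as the continuous analog of a triangle in Vigoda's graph analysis and enabling $\gamma$ to be pushed below $2$.

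The main obstacle will be the paired-difference analysis: from such a state, an ill-placed add proposal can produce a three-point symmetric difference, so the metric must be extended to longer matchings, or a modified coupling used (e.g., a shifted add proposal coupling $x \leftrightarrow x + (z_2 - z_1)$ that synchronizes which chain succeeds). The key quantitative input needed is a uniform lower bound on $V$ for pairs $z_1, z_2$ with $|z_1 - z_2| \le 2r$ that is sharp enough to balance the bad three-point events, for which explicit spherical-cap volume estimates should suffice. Once uniform contraction $\E[\phi(X_1, Y_1)] \le (1 - c/n)\phi(X_0, Y_0)$ is established for all $\lam < 2^{1-d}$, iteration yields $\E[\phi(X_t, Y_t)] \le e^{-cs}\phi(X_0, Y_0) \le O(n)e^{-cs}$ for $t = \lfloor sn\rfloor$, using that the diameter of $\Omega_\Lam^\tau$ under $\phi$ is $O(n)$ since a packing contains at most $O(n)$ spheres. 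Because $\phi$ dominates total variation, this produces the bound $\|X_t - Y_t\|_{TV} \le bn e^{-cs}$ required by the definition of optimal temporal mixing, completing the proof.
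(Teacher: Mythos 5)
Your high-level plan — path coupling with a Vigoda-style optimized metric, exploiting the overlap of blocking balls as the continuum analogue of triangles — is the same route the paper takes, but your proposed metric fails at the close-pair drift analysis you flag as the main obstacle. You charge a flat cost $\gamma = 3/2$ for any close pair, independent of its separation. From a close-pair state $X_0 = Z\cup\{z_1\}$, $Y_0 = Z\cup\{z_2\}$ with $|z_1-z_2|<2r$, the identity coupling gives
\begin{equation*}
\E[\Delta\phi] \le \frac{1}{n(1+\lam)}\Bigl[(\gamma - 2)V + 2(1-\gamma) + 2\lam(2^d - W)\Bigr],
\end{equation*}
where $V = |B_r(z_1)\cap B_r(z_2)|$ and $W = |B_{2r}(z_1)\cap B_{2r}(z_2)|$; at $\gamma=3/2$ and $\lam = 2^{1-d}$ this equals $\bigl(3 - V/2 - 2^{2-d}W\bigr)/(n(1+\lam))$. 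As $|z_1 - z_2|\to 2r$ we have $V\to 0$, so the extra coalescence you rely on vanishes, and contraction would then require $W\ge(3/4)\cdot 2^d$; but the overlap of two radius-$2r$ balls whose centers are $2r$ apart is only about $0.39\cdot 2^d$ for $d=2$ and an asymptotically vanishing fraction of $2^d$ as $d\to\infty$. The drift is therefore strictly positive, so a flat $\gamma$ cannot encode enough geometry. The shifted add-proposal coupling you suggest also does not obviously help, since a translated proposal point can disagree with the original on whether it collides with the shared centers $Z$, the boundary conditions $\tau$, or $\partial\Lam$.

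The paper avoids close-pair drift analysis entirely by building a genuine pre-metric on the extended state space $\Omega^{\tau,*}_\Lam$ of at-most-doubly-covered configurations: $\hat D(X, X\cup\{v\}) = 2^d - c|O_X(v)|$ with $c = \lam 2^d/(2+\lam 2^d)$, which depends continuously on how much of $B_{2r}(v)$ is blocked by $X$, $\tau$, and $\Lam^c$. Since this $\hat D$ is a pre-metric on the adjacency graph of $\Omega^*$, the Bubley--Dyer path coupling theorem only requires contraction for elementary pairs. The bad-add state $(X\cup\{w\}, Y)$ with $Y = X\cup\{v\}$ is then controlled by a triangle inequality through $Y\cup\{w\} = X\cup\{v,w\}\in\Omega^*$, which yields a position-dependent cost $2^d - c|B_{2r}(w)\cap\Gamma(Y)|$ that is integrated over the free region $U_X(v)$ and balanced against the coalescence contributed by event $A_3$. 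It is this continuously varying discount — not a flat $\gamma$ — that produces contraction all the way to $\lam < 2^{1-d}$. To repair your argument you would need $\phi$ to track the actual blocked-volume overlap of the disagreeing centers, which essentially reconstructs the paper's pre-metric on $\Omega^*$.
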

We then prove that optimal temporal mixing of the single-center
dynamics implies strong spatial mixing. 
\begin{theorem}
  \label{thmSingleTempToSpatial}
  Fix $d \ge 2, \lam >0$. 
  If the single-center dynamics has optimal temporal mixing on $\R^d$, then the hard sphere model on $\R^d$ exhibits strong
  spatial mixing.
\end{theorem}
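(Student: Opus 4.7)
The plan is to adapt the grand-coupling argument of Dyer--Sinclair--Vigoda--Weitz from discrete spin systems to the continuum. Fix $\Lam' \subseteq \Lam$ with $n = |\Lam|$, boundary conditions $\tau, \tau'$, and set $d_0 = \mathrm{dist}(\tau \triangle \tau', \Lam')$. I run two instances $(X_t)$ and $(Y_t)$ of the single-center dynamics with boundary conditions $\tau$ and $\tau'$ respectively, starting from a common initial configuration (e.g.\ the empty one), and coupled by sharing every random choice: the same uniformly chosen site $x_t \in \Lam$ and the same add/remove coin flip at every step. Let $T = \lceil sn\rceil$ for a parameter $s > 0$ to be chosen.

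Next I control how disagreement propagates under this coupling. A step at time $t$ can create a fresh disagreement between $X_{t+1}$ and $Y_{t+1}$ near $x_t$ only if either (i) $x_t \in \tau \triangle \tau'$, so an add attempt is accepted by one chain and rejected by the other, or (ii) the current disagreement set $X_t \triangle Y_t$ contains a point within distance $2r$ of $x_t$, so the chains disagree about whether placing a center at $x_t$ causes an overlap. In every other case both chains perform identical updates. Consequently any point of $X_T \triangle Y_T$ lying inside $\Lam'$ is the terminal node of a ``causal chain'' of update sites $x_{t_1}, \dots, x_{t_k}$ with $t_1 < \cdots < t_k \le T$, with $x_{t_1}$ within $2r$ of $\tau \triangle \tau'$, with $\|x_{t_{i+1}}-x_{t_i}\|\le 2r$ for each $i$, and with length $k \ge k_0 := \lceil d_0/(2r)\rceil$.

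A union bound over causal chains then yields the key coupling estimate. At each step a fixed ball of volume $O(1)$ is selected with probability $O(1/n)$, so the probability of realising a chain satisfying $k$ prescribed $2r$-neighbourhood constraints within $T$ steps is at most $\binom{T}{k}(C/n)^k \le (CeT/(kn))^k$. Integrating over terminal points in $\Lam'$ and summing over $k \ge k_0$ gives
\[
\Pr\bigl[(X_T \triangle Y_T) \cap \Lam' \ne \emptyset\bigr] \le K|\Lam'|\,\bigl(CeT/(k_0 n)\bigr)^{k_0},
\]
which is exponentially small in $d_0$ provided $s = T/n$ is at most a small constant multiple of $d_0$.

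Finally I combine with optimal temporal mixing via the triangle inequality
\[
\|\mu_\Lam^{\tau}-\mu_\Lam^{\tau'}\|_{\Lam'} \le \|X_T - Y_T\|_{\Lam'} + \|X_T - \mu_\Lam^{\tau}\|_{TV} + \|Y_T - \mu_\Lam^{\tau'}\|_{TV},
\]
where the last two terms are each at most $bn e^{-cs}$ by hypothesis. Choosing $s$ of order $d_0 + \log n$ makes the mixing errors exponentially small in $d_0$ while keeping the causal-chain term of order $e^{-\alpha d_0}$ for some $\alpha>0$ depending only on $d$ and the optimal-temporal-mixing constants. The central obstacle is precisely this tension: optimal temporal mixing forces $s \gtrsim \log n$, while the causal-chain bound requires $s \lesssim d_0$, so the direct argument only works when $d_0$ is not much smaller than $\log n$. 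Treating the complementary regime cleanly -- either by localising to a neighbourhood of $\Lam'$ of size polynomial in $d_0$ via the spatial Markov property of the hard-sphere Gibbs measure, or by absorbing it into the trivial bound $\|\cdot\|_{TV}\le 1$ (noting that we may assume $|\Lam'|\ge 1$ since otherwise $\|\cdot\|_{\Lam'}=0$) -- and extracting constants $\alpha,\beta$ independent of $\Lam$ is the delicate step in completing the proof.
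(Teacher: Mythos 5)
Your approach is structurally aligned with the paper's proof: the paper also uses the identity coupling, an ``ordered chain'' disagreement-propagation estimate (Lemma~\ref{lem:infPropagationSingle}, essentially your causal-chain bound), and a triangle inequality. But you have correctly identified the central obstacle and then stopped exactly where the real work begins. The hypothesis of optimal temporal mixing gives $\|X_T - \mu_\Lam^\tau\|_{TV}\le b\,n\,e^{-cs}$, whose prefactor is $n=|\Lam|$; to kill it you need $s\gtrsim \log n$, while the propagation bound forces $s\lesssim d_0$. This is a genuine gap, not a loose end, and your two proposed patches do not close it. Absorbing into the trivial bound $\|\cdot\|_{TV}\le 1$ only works when $\beta|\Lam'|e^{-\alpha d_0}\ge 1$, i.e.\ $d_0\lesssim\log|\Lam'|$; the failing regime is $d_0\lesssim\log n$, and $|\Lam'|$ may be far smaller than $n$, so the gap persists. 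And ``localising via the spatial Markov property of the Gibbs measure'' is stated as an option without content: the Gibbs measure's Markov property by itself does not give any quantitative comparison between the chain run on $\Lam$ and a chain run on a small window.

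The paper's resolution is a separate lemma, projected optimal mixing (Lemma~\ref{lem:Proj-MixingSingle}), which upgrades the hypothesis from an $n$-dependent bound to an $|\Lam'|$-dependent one before the triangle inequality is applied. The mechanism is Markov-chain localization rather than a property of the equilibrium measure: define auxiliary chains $X_t^R, Y_t^R$ on a fixed neighbourhood $A_R$ of $\Lam'$ of radius $R$ proportional to the number of sweeps $\eta$, with the configuration outside $A_R$ frozen to empty. Comparing $X_t$ to $X_t^R$ is again a disagreement-propagation estimate (the boundary discrepancy lives outside $A_R$, so it cannot reach $\Lam'$ in $\eta n$ steps), while the two localized chains $X^R_t, Y^R_t$ are compared via optimal temporal mixing applied \emph{on $A_R$}, yielding an error $b|A_R|e^{-c\eta/2}$ rather than $b\,n\,e^{-cs}$. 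The volume $|A_R|$ is then bounded by a constant times $|\Lam'|$ using the Fradelizi--Marsiglietti parallel-set inequality (Lemma~\ref{lemParallelVolume}); this last step is specific to the continuum and has no counterpart you would see coming from the discrete argument. After this upgrade the triangle inequality you wrote does close, with $s$ of order $d_0$ only and no $\log n$ requirement. So your outline needs the additional lemma --- projected optimal mixing --- and the geometric input controlling $|A_R|/|\Lam'|$; without them the proof as written does not establish strong spatial mixing for domains with $d_0 \ll \log n$.
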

Together these theorems imply
Theorem~\ref{thmGenThm}. 
\begin{proof}[Proof of Theorem~\ref{thmGenThm}]
  Theorems~\ref{thmMCmixing} and~\ref{thmSingleTempToSpatial} together
  immediately imply that
  $\lam_c(d)\ge \lam_{\mathrm{SSM}}(d) \ge2^{1-d}$, the first
  inequality by the remark following~\eqref{eq:SSM}.
\end{proof}

The proof of Theorem~\ref{thmSingleTempToSpatial} does not use anything
specific about the single-center dynamics except that it performs
updates within a randomly chosen ball of bounded radius.  Another
Markov chain with this property is the \textit{heat-bath dynamics}
with update radius $L>0$. To define this chain, recall the
$\ell$-parallel set $A^{(\ell)}$ of $A\subset\R^{d}$ is
\begin{equation}
A ^{(\ell)} = \{ x \in \R^d : \mathrm{dist}(x,A) \le \ell \}.
\end{equation}
In particular, given our definition of $\Lam_{\mathrm{Int}}$ above, we have $\Lam = \Lam_{\mathrm{Int}}^{(r)}$.
  To make one step of the heat-bath dynamics we pick a point
$x \in \Lam_{\mathrm{Int}}^{(L)}$ uniformly at random and then resample the centers
in $B_L(x)$ subject to the boundary conditions induced by the other
centers in the current configuration and $\tau$.  
Optimal temporal mixing for the heat-bath dynamics also implies strong spatial mixing.
\begin{theorem}
\label{thmHeatBathTemptoSpatial}
Fix $d \ge 2, \lam >0, L>0$.  If the heat-bath dynamics with update
radius $L$ has optimal temporal mixing on $\R^d$, then the hard sphere
model on $\R^d$ exhibits strong spatial mixing.
\end{theorem}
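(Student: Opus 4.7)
The plan is to mirror the proof of Theorem~\ref{thmSingleTempToSpatial} line by line. That argument relies on only two features of the single-center dynamics: (i) each step modifies the current configuration only within a ball of bounded radius chosen uniformly at random, and (ii) the transition distribution inside that ball depends on the remainder of the configuration only through its restriction to a bounded neighborhood (a consequence of the finite range $2r$ of the hard-core interaction). The heat-bath dynamics with update radius $L$ shares both features (with $L$ replacing the single-center radius), so the same framework applies.

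Concretely, fix $\Lam' \subseteq \Lam$ compact, boundary conditions $\tau, \tau'$, and set $n = |\Lam|$ and $T = \lfloor sn \rfloor$ for a parameter $s>0$ to be chosen. Run two heat-bath chains $(X_t)$ on $\Omega_\Lam^\tau$ and $(Y_t)$ on $\Omega_\Lam^{\tau'}$, both started from the empty configuration (valid in either state space). Applying optimal temporal mixing to each chain separately---by comparing with a companion chain started from the relevant stationary distribution---gives $\norm{\mathcal{L}(X_T) - \mu_\Lam^\tau}_{TV}, \norm{\mathcal{L}(Y_T) - \mu_\Lam^{\tau'}}_{TV} \le bne^{-cs}$. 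The triangle inequality, restricted to $\Lam'$, then yields
\[
\norm{\mu_\Lam^\tau - \mu_\Lam^{\tau'}}_{\Lam'} \le 2bne^{-cs} + \norm{\mathcal{L}(X_T) - \mathcal{L}(Y_T)}_{\Lam'}.
\]

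For the second term I would couple the chains by using the same uniform update point $x_t \in \Lam_{\mathrm{Int}}^{(L)}$ at each step, and then a maximal coupling of the two heat-bath resamplings on $B_L(x_t)$. The key locality claim is that when $B_L(x_t)$ is disjoint from $\tau \triangle \tau'$ and $X_t, Y_t$ coincide on the annulus $B_L(x_t)^{(2r)} \setminus B_L(x_t)$, the two heat-bath measures on $B_L(x_t)$ are literally identical---both equal a Poisson process on $(B_L(x_t) \cap \Lam_{\mathrm{Int}}) \setminus \tau$ conditioned on the same finite-range hard-core constraints---so the updates may be coupled identically and any previous disagreement inside $B_L(x_t)$ is erased. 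Consequently, a disagreement at some point $z \in \Lam'$ at time $T$ forces a \emph{causal chain}: times $t_1 < \cdots < t_k \le T$ and update points $x_{t_1}, \dots, x_{t_k}$ such that $B_L(x_{t_1})$ meets $\tau \triangle \tau'$, each $B_L(x_{t_{j+1}})$ lies within distance $2r$ of $B_L(x_{t_j})$, and $z$ lies within distance $2r$ of $B_L(x_{t_k})$. Such a chain has geometric span at least $\mathrm{dist}(\tau \triangle \tau', \Lam')$ and hence length at least $k_{\min} = \lceil \mathrm{dist}(\tau \triangle \tau', \Lam')/(2L+2r) \rceil$. A union bound over causal chains---using that at each step the ball $B_L(x_{t_{j+1}})$ must lie in a region of volume $O((L+r)^d)$, a fraction $O(1/n)$ of $|\Lam_{\mathrm{Int}}^{(L)}|$---bounds the coupling term by something of order $|\Lam'|\,(C_{L,d}\, s)^{k_{\min}}/k_{\min}!$, which decays exponentially in $\mathrm{dist}(\tau \triangle \tau', \Lam')$ once $s$ is taken small enough. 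Optimizing $s$ so that the causal-chain bound and the temporal-mixing error $2bne^{-cs}$ decay at comparable rates produces the claimed SSM inequality.

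The one place in this plan that needs real checking, rather than being essentially cosmetic, is the locality claim behind the coupling: one must verify that the heat-bath measure on $B_L(x_t)$---a normalized Poisson intensity conditioned on a hard-core exclusion from both $\tau$ and the centers of $X_t$ outside $B_L(x_t)$---is determined solely by the portion of that data lying in $B_L(x_t)^{(2r)}$. This is a routine consequence of the finite range of the two-body interaction, but it is the only step at which the continuous setting differs from the discrete analogue in~\cite{dyer2004mixing}. Once it is granted, the causal-chain counting and the choice of $s$ are identical to the single-center case, and in fact both Theorem~\ref{thmSingleTempToSpatial} and Theorem~\ref{thmHeatBathTemptoSpatial} can be packaged as corollaries of a single lemma applying to any update-local Markov chain with bounded ball radius.
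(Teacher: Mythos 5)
Your overall template—triangle inequality, optimal temporal mixing for the stationary-distance terms, a disagreement-propagation (``causal chain'') estimate for the coupling term—is the right one, and your locality observation about the heat-bath update (that the resampling on $B_L(x_t)$ depends on the rest of the configuration and on $\tau$ only through their intersection with $B_L(x_t)^{(2r)}$) is correct and is indeed the one point at which the heat-bath case differs cosmetically from the single-center case. However, there is a genuine gap in the way you apply optimal temporal mixing, and it is not fixable by a better choice of $s$.

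The problem is the factor of $n = |\Lam|$ in the bound $\norm{\mathcal L(X_T) - \mu_\Lam^\tau}_{TV} \le b n e^{-cs}$. Strong spatial mixing requires $\norm{\mu_\Lam^\tau - \mu_\Lam^{\tau'}}_{\Lam'} \le \beta |\Lam'| e^{-\alpha\,\mathrm{dist}(\tau\triangle\tau',\Lam')}$ with constants uniform over \emph{all} compact $\Lam' \subseteq \Lam$; in particular the bound must survive taking $\Lam$ huge while $\Lam'$ and $\mathrm{dist}(\tau\triangle\tau',\Lam')$ stay fixed. To kill the $bne^{-cs}$ term you would then need $s \gtrsim (\log n)/c$, but the causal-chain bound (which is of the form $|\Lam'|\,(C_{L,d}\,s / k_{\min})^{k_{\min}}$ with $k_{\min}\sim\mathrm{dist}/(2L+2r)$) deteriorates as $s$ grows: once $s \gg k_{\min}$ the base $C_{L,d}\,s/k_{\min}$ exceeds $1$ and the estimate becomes vacuous. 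So whenever $\mathrm{dist}(\tau\triangle\tau',\Lam') \ll \log |\Lam|$, no choice of $s$ makes both terms small simultaneously. The paper circumvents this by inserting an intermediate ``projected optimal mixing'' lemma (Lemma~\ref{lem:Proj-MixingSingle}): one introduces auxiliary chains $X^R_t, Y^R_t$ living on a localized region $A_R \supset \Lam'$ with $|A_R|\lesssim (R/r)^d|\Lam'|$, compares $X_t$ to $X^R_t$ (and $Y_t$ to $Y^R_t$) on $\Lam'$ via the disagreement-propagation lemma, and only then invokes optimal temporal mixing \emph{on $A_R$}, whose volume is controlled by $|\Lam'|$ rather than by $n$. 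This localization is the step your sketch omits; without it the $n$-dependence is uncontrolled. Once you add it, the rest of your argument (the two-term triangle inequality in the paper, started from a stationary sample so that one chain needs no temporal-mixing estimate at all, together with the causal-chain estimate) goes through exactly as you describe, and the heat-bath case is indeed a near-verbatim transcription of the single-center case.
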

The proof of this theorem is essentially identical to that of
  Theorem~\ref{thmSingleTempToSpatial} (see
  Section~\ref{sec:preparatory-lemmas}), and hence will be omitted.
We also prove a converse to
Theorem~\ref{thmHeatBathTemptoSpatial}: that strong spatial mixing
implies that the heat-bath dynamics exhibit optimal temporal mixing,
provided the update radius is sufficiently large (we define optimal
temporal mixing for the heat-bath dynamics just as for the
single-center dynamics).
\begin{theorem}
  \label{thm:heatbathConverse}
  Fix $d \ge 2, \lam >0$.  If the hard sphere model on $\R^d$ exhibits
  strong spatial mixing, then there is an $L_{0}>0$ such that for
  $L\geq L_{0}$ the heat-bath dynamics with update radius $L$ exhibits
  optimal temporal mixing.
\end{theorem}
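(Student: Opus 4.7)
\textbf{Proof plan for Theorem~\ref{thm:heatbathConverse}.}
The plan is to adapt the path coupling method of Bubley and Dyer to the continuum hard-sphere heat-bath dynamics, using SSM to furnish the requisite contraction of a Hamming-type metric. Fix $\lam$ at which SSM holds with constants $\alpha, \beta > 0$, and let $n = |\Lam|$. I will show that if the update radius $L$ is sufficiently large, then a natural Markovian coupling of two heat-bath chains contracts by a factor $1 - c/n$ per step; extension via path coupling then yields $O(n \log n)$ mixing and hence optimal temporal mixing.

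The coupling I use is as follows: given $X_t, Y_t \in \Omega_{\Lam}^{\tau}$, both chains pick the same center $x_t \in \Lam_{\Int}^{(L)}$ uniformly at random, and the resampling inside $B_L(x_t)$ is done via an optimal total variation coupling of the two conditional hard-sphere measures on $B_L(x_t)$ given the respective exterior configurations. Applying path coupling with the Hamming distance $d(X,Y) := |X \triangle Y|$, the key step is to analyze pairs with $d(X_t, Y_t) = 1$, so $Y_t = X_t \cup \{z\}$ for some $z \in \Lam_{\Int}$. If $z \in B_L(x_t)$, which occurs with probability $|B_L(x_t)|/|\Lam_{\Int}^{(L)}|$, then the two exterior boundaries coincide and the optimal coupling gives $d(X_{t+1}, Y_{t+1}) = 0$. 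Otherwise the two exterior boundaries differ only by the presence of $z$ at distance $\delta := \mathrm{dist}(z, B_L(x_t))$ from the update ball, so SSM bounds the TV distance of the two conditional measures on $B_L(x_t)$ by $\beta |B_L(x_t)| e^{-\alpha \delta}$. When this coupling fails, $d$ can grow by at most $2|B_{L+r}(x_t)|$, by the elementary packing bound that at most $|B_{L+r}(x_t)|$ sphere centers fit inside $B_L(x_t)$.

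Writing $v_L$ for the volume of a ball of radius $L$ in $\R^d$ and integrating over $x_t \in \Lam_{\Int}^{(L)}$ yields the contraction estimate
\[
\E\cb{d(X_{t+1}, Y_{t+1})} \le 1 -\frac{v_L}{|\Lam_{\Int}^{(L)}|} + \frac{2 \beta v_{L+r} v_L}{|\Lam_{\Int}^{(L)}|}\int_{0}^\infty d \omega_d (L+s)^{d-1} e^{-\alpha s}\, ds,
\]
where $\omega_d$ denotes the surface area of the unit ball in $\R^d$. The exponential factor makes the integral uniformly bounded in $L$, so for $L \ge L_0 = L_0(\alpha, \beta, d)$ sufficiently large the right-hand side is at most $1 - c/n$ with $c = c(L) > 0$. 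To pass to general pairs, I apply the Bubley--Dyer path coupling extension along single-sphere paths $X_t \to X_t \cap Y_t \to Y_t$; these remain in $\Omega_\Lam^\tau$ because removing spheres preserves the hard-core constraint and subsequent additions only re-introduce spheres already present in $X_t \cup Y_t$. The result is $\E[d(X_t, Y_t)] \leq D(1-c/n)^t$ with $D \leq 2n$ since any valid configuration in $\Lam$ contains at most $n$ spheres by volume. The standard coupling inequality then gives $\norm{X_{\lfloor sn\rfloor} - Y_{\lfloor sn\rfloor}}_{TV} \leq 2n\, e^{-cs}$ for all $s > 0$, which is exactly the optimal temporal mixing estimate.

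The main obstacle is choosing $L_0$ large enough that the $v_L$ prefactor in SSM---which is intrinsic, as SSM controls TV distance over an entire region of volume $v_L$---is absorbed by the exponential decay when averaged against the polynomial volume growth of $\R^d$, and doing so uniformly in $\Lam$ and $\tau$. A secondary technical point is the measurable construction of the optimal TV coupling between two continuum hard-sphere measures, which follows from standard disintegration arguments.
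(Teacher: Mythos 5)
Your overall plan — path coupling with Hamming distance, an identity coupling of update balls, and an appeal to SSM to control the disagreement when the differing center falls near the boundary of the update ball — matches the paper's strategy at a high level, but the step in which you apply SSM to the \emph{whole} update ball is incorrect, and the resulting contraction estimate fails.

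Recall that SSM bounds $\norm{\mu_\Lam^\tau-\mu_\Lam^{\tau'}}_{\Lam'}\le\beta|\Lam'|e^{-\alpha\,\mathrm{dist}(\tau\triangle\tau',\Lam')}$, with exponential decay in the distance from the set $\tau\triangle\tau'$ where the boundary conditions disagree to the projection region $\Lam'$. In your application $\Lam'=B_L(x_t)$, and $\tau_X\triangle\tau_Y$ is the shadow of $z$, a subset of $B_{2r}(z)$ that lies inside $B_L(x_t)$ whenever it is nonempty. So $\mathrm{dist}(\tau_X\triangle\tau_Y,\Lam')=0$ precisely when the boundary conditions actually differ, and the SSM bound is vacuous; and when $\delta>2r$ the boundary conditions already agree, so there is nothing for SSM to do. The quantity $e^{-\alpha\delta}$ with $\delta=\mathrm{dist}(z,B_L(x_t))$ is simply not the SSM exponent. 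Moreover, even granting your bound, the estimate cannot give contraction: $\int_0^\infty\omega_d(L+s)^{d-1}e^{-\alpha s}\,ds$ is of order $\omega_d L^{d-1}/\alpha$ (it is \emph{not} uniformly bounded in $L$), so your error term scales like $\beta v_{L+r}v_L L^{d-1}/N\sim L^{3d-1}/N$, which dwarfs the contraction term $v_L/N\sim L^d/N$ for every $L$ and every $d\ge1$. No choice of $L_0$ helps.

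What is missing is the near/far decomposition the paper uses. With $K=L/r$ and the differing center $u$ in the thin shell $B_{L+2r}(x)\setminus B_L(x)$ (of volume $\approx 2dK^{d-1}$), split $B_L(x)$ into a near region $A$ within distance $t$ of $u$ and its complement $\overline A$. On $A$ use only the trivial packing bound for the Hamming increase, which is at most $2V_{2r+t}$; choosing $t\asymp rK^{1/d}$ makes this of order $K/(4d)$, much smaller than $K^d$. On $\overline A$ the SSM distance genuinely grows, $\mathrm{dist}(\tau_X\triangle\tau_Y,\overline A)\ge t-2r$, so SSM gives a contribution of order $(K+1)^{2d}e^{-\alpha(t-2r)}$ that decays super-polynomially in $K$ since $t\sim K^{1/d}\to\infty$. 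Integrating over the shell then yields
\begin{equation*}
  \E[\Delta]\le -\frac{K^d}{N}+\frac{2d(K+2)^{d-1}}{N}\left(\frac{K}{4d}+\text{small}\right)\le -\frac{K^d}{3N}
\end{equation*}
for $K$ large, which is the contraction you need. The point is that choosing the split scale $t$ to be a sublinear power of $K$ is what lets the SSM exponential defeat the polynomial volume factors; applying SSM to the entire update ball can never achieve this because the SSM distance does not grow with $L$.
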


\subsection{Notation and conventions}
\label{sec:notation-conventions}

We briefly collect some frequently used concepts. $B_{\ell}(x)$
denotes the open ball of radius $\ell$ centered at $x\in \R^{d}$, and
$V_{\ell} = |B_{\ell}(x)|$ will denote the volume of this set. In
particular, $V_{r}=1$. More generally, $|A|$ will denote the Lebesgue
measure of $A\subset \R^{d}$. For $\Lam\subset \R^{d}$ the
\emph{$\ell$-parallel set} $\Lam^{(\ell)}$ of $\Lam$ is
$\{x\in \R^{d} : d(x,\Lam)\leq \ell \}$.  By an abuse of notation, if $B$
is a finite set, we will write $|B|$ for the cardinality of $B$.

\section{A rapidly mixing Markov chain for the hard sphere model}
\label{secMarkov}

In this section we prove that the single-center dynamics for the hard
sphere model at fugacities $\lambda < 2^{1-d}$ mixes rapidly. We begin
by reviewing Markov chain mixing.

\subsection{Markov chain mixing}
\label{sec:mcmix}
Let $\Omega$ denote the state space of a discrete time Markov
chain. Let $p^{(0)}$ be the initial probability distribution on
$\Omega$, and let $p^{(t)}$ be the distribution after $t$ steps of the
Markov chain. Suppose the chain has a unique stationary distribution
$\mu$.  The mixing time of the chain is a worst-case estimate for the
number of steps it takes the Markov chain to approach
stationarity. More precisely,
\begin{defn} The mixing time of a Markov chain is
  \begin{equation}
    t_{\text{mix}}(\ve)=\sup_{p^{(0)} \in \mathcal{P}}\min \set{  t : \Vert p^{(t)} -\mu\Vert_{TV} \leq \ve}
  \end{equation}
  where $\mathcal{P}$ denotes the set of probability distributions on
  $\Omega$.
\end{defn}  

A common approach to bounding the mixing time of a Markov chain
is to construct a coupling. For our purposes, a coupling of two Markov
chains $(X_{t})_{t\geq 0}$ and $(Y_{t})_{t\geq 0}$ on $\Omega$ is a
stochastic process $(X_t, Y_t)_{t\geq 0}$ with values in $\Omega \times \Omega$
such that the marginals $(X_t)_{t\geq 0}$ and $(Y_t)_{t\geq 0}$ are
faithful copies of the Markov chains, and $X_{t+1}=Y_{t+1}$ whenever
$X_t= Y_t $.

The path coupling theorem of Bubley and Dyer~\cite{bubley1997path}
says that constructing a coupling for single steps of the Markov
chains from certain pairs of configurations in $\Omega$ is sufficient
for establishing an upper bound on the mixing time. To use this
approach, one must represent the state space $\Omega$ as the vertex set of a
connected (finite or infinite) graph $G_{\Omega}$ with a function
$\hat{D}\geq 1$ defined on the edges of $G_{\Omega}$. 
 The \emph{path metric} $D$ corresponding
to $\hat{D}$ is the shortest path distance on the graph with edge
weights given by $\hat D$, i.e.,
\begin{equation}
  \label{eq:pathmetric}
  D(X,Y)  = \inf_{\gamma\colon X\to Y}\set{
  \sum_{i=0}^{\abs{\gamma}-1} \hat{D}(\gamma_{i},\gamma_{i+1}) },
\end{equation}
where the infimum is over nearest-neighbor paths
$\gamma = (\gamma_{0},\gamma_{1}, \dots, \gamma_{\abs{\gamma}})$ in $G_{\Omega}$
with $\gamma_{0}=X$ and
$\gamma_{\abs{\gamma}}=Y$.   The path coupling technique requires
  that $\hat D$ is a \emph{pre-metric} on $G_{\Omega}$, which by
  definition means that 
\begin{equation*}
  \label{eq:premetric}
  \hat D(X,Y) = D(X,Y) \qquad \text{for all edges $\{X,Y \}$ of $G_{\Omega}$}.
\end{equation*}

To establish a rapid mixing regime for the
single-center dynamics we will apply the version of Bubley and Dyer's
path coupling theorem stated below. In the theorem and what follows,
the diameter with respect to $D ( \cdot, \cdot)$,
 of the graph on $\Omega$ is 
 \begin{equation}
 \text{diam}(\Omega) = \sup_{X,Y \in \Omega} D(X,Y). 
 \end{equation}

\begin{theorem}[{\cite[Corollary 14.7]{levin2017markov}}]
  \label{path coupling thm}
  Suppose the state space $\Omega$ of a Markov chain is the vertex set
  of a connected graph $G_{\Omega}$, and suppose that $\hat{D}$ is a pre-metric
   on $G_{\Omega}$. Let $D$ be the corresponding path metric.

  Suppose that for each edge $\{X_0, Y_0\}$ of $G_{\Omega}$ 
  the following holds: if $p^{(0)}$ and $q^{(0)}$ are the distributions
  concentrated on the configurations $X_0$ and $Y_0$ respectively, then
  there exists a coupling $(X_1, Y_1)$ of the distributions $p^{(1)}$
  and $q^{(1)}$ such that
  \begin{equation*}
    \E\cb{D(X_1, Y_1)}\leq  D(X_0, Y_0)e^{-\alpha}, 
  \end{equation*}
  where $\E$ is the expectation with respect to the Markov chain. Then
  \begin{equation*}
    t_{\mathrm{mix}}(\varepsilon) \leq \left \lceil \frac{ \log(\mathrm{diam}(\Omega)) + \log(1/\varepsilon )}{\alpha} \right
    \rceil.
  \end{equation*}
\end{theorem}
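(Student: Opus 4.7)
The plan is to boost the single-edge contraction hypothesis to a contraction on arbitrary pairs via path gluing, iterate to get geometric decay of $\E[D(X_t,Y_t)]$, and then convert distance in $D$ to total variation via the usual coupling inequality. The argument is the classical Bubley--Dyer path coupling argument, which I outline below.

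First I would show that for any two configurations $X_0,Y_0\in\Omega$ the one-step contraction $\E[D(X_1,Y_1)] \le e^{-\alpha} D(X_0,Y_0)$ holds, not just for edges. Fix $\eta>0$ and let $\gamma=(\gamma_0,\dots,\gamma_m)$ be a nearest-neighbor path in $G_\Omega$ from $X_0=\gamma_0$ to $Y_0=\gamma_m$ whose length under $\hat D$ is within $\eta$ of $D(X_0,Y_0)$. For each consecutive pair $(\gamma_i,\gamma_{i+1})$ the hypothesis supplies a coupling of one step of the chain from $\gamma_i$ and $\gamma_{i+1}$; glue these couplings together by the standard measure-theoretic concatenation (iteratively conditioning on the intermediate coordinate) to obtain a joint law on $(\widetilde X_1^0,\dots,\widetilde X_1^m)$ whose consecutive marginals are the given edge couplings. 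Then by the triangle inequality for $D$ and the pre-metric identity $\hat D(\gamma_i,\gamma_{i+1})=D(\gamma_i,\gamma_{i+1})$,
\[
\E\!\left[D(\widetilde X_1^0,\widetilde X_1^m)\right] \;\le\; \sum_{i=0}^{m-1}\E\!\left[D(\widetilde X_1^i,\widetilde X_1^{i+1})\right] \;\le\; e^{-\alpha}\sum_{i=0}^{m-1}\hat D(\gamma_i,\gamma_{i+1}) \;\le\; e^{-\alpha}\bigl(D(X_0,Y_0)+\eta\bigr).
\]
Projecting to the marginal $(\widetilde X_1^0,\widetilde X_1^m)$ yields a valid coupling of one step of the chain from $X_0$ and from $Y_0$; letting $\eta\downarrow 0$ gives $\E[D(X_1,Y_1)]\le e^{-\alpha}D(X_0,Y_0)$.

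Next I would iterate. Concatenating the per-step couplings (the standard Markovian construction: sample $(X_{t+1},Y_{t+1})$ from the coupling guaranteed by the previous step, conditional on $(X_t,Y_t)$) produces a coupling of $(X_t)$ and $(Y_t)$ with arbitrary initial laws, and by the tower property
\[
\E\!\left[D(X_t,Y_t)\right] \;\le\; e^{-\alpha t}\,\E\!\left[D(X_0,Y_0)\right] \;\le\; e^{-\alpha t}\,\mathrm{diam}(\Omega).
\]
Because $\hat D\ge 1$ on edges, $D(X,Y)\ge 1$ whenever $X\ne Y$, so $\Pr[X_t\ne Y_t]\le \E[D(X_t,Y_t)]$. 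Taking $Y_0\sim\mu$ (so $Y_t\sim\mu$ for all $t$) and using the coupling bound on total variation yields
\[
\bigl\|p^{(t)}-\mu\bigr\|_{TV} \;\le\; \Pr[X_t\ne Y_t] \;\le\; e^{-\alpha t}\,\mathrm{diam}(\Omega),
\]
uniformly in the starting law $p^{(0)}$. Solving $e^{-\alpha t}\,\mathrm{diam}(\Omega)\le \varepsilon$ for $t$ gives the stated ceiling bound on $t_{\mathrm{mix}}(\varepsilon)$.

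The only delicate step is the path-gluing, which is where the pre-metric hypothesis is essential: without it, the edge contraction could fail to aggregate along $\gamma$ because $\hat D$ on an edge might overcount the $D$-distance between its endpoints, and the triangle-inequality bound would lose a constant factor at each joining. A secondary mild obstacle is measurability of the glued coupling when $\Omega$ is uncountable, but this is standard (disintegration of the edge couplings along their first marginal), and for the hard sphere single-center dynamics it can in any case be handled directly since the chain's one-step laws have explicit densities against a reference product measure. Iteration to produce a joint Markovian coupling from the per-step couplings is likewise routine by Ionescu--Tulcea.
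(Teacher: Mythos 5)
Your proposal is correct and follows the classical Bubley--Dyer path coupling argument, which is exactly the argument behind \cite[Corollary 14.7]{levin2017markov} that the paper cites without reproducing (the remark after the theorem simply notes the discrete proof applies essentially verbatim to the continuous state space). Your write-up supplies the details the paper leaves implicit, including the two genuinely nonstandard points for an uncountable state space — $\eta$-near-optimal paths in place of shortest paths, and measurability of the glued and iterated couplings — and your handling of both is adequate.
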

\begin{remark}
  \cite[Corollary 14.7]{levin2017markov} concerns Markov chains on
  finite state spaces, but the proof applies essentially verbatim to
  our context.
\end{remark}

\subsection{Single-center dynamics} 
We will use Theorem~\ref{path coupling thm} to prove that the
single-center dynamics introduced in Section~\ref{sec:OTM-I} are rapidly mixing 
at fugacities $\lambda < 2^{1-d}$.

\begin{theorem}\label{mixing}
  Let $\Lambda \subset \R^d$ be compact and measurable,
  $n= |\Lambda|$, $\gamma\in (0,1)$, and let
  $\lambda=(1-\gamma)2^{1-d}$.  The mixing time of the single-sphere
  dynamics with fugacity $\lambda$ on $\Omega_\Lambda^\tau$ satisfies
   \begin{equation*}
    t_{\text{mix}}(\varepsilon) \leq 
    \left \lceil
      \frac{4n(  \log (2^{d+2}n) + \log(1/\varepsilon))}{\gamma}
    \right \rceil .
  \end{equation*}
for all boundary conditions $\tau$.
\end{theorem}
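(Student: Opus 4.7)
The plan is to apply the version of the Bubley--Dyer path coupling theorem just stated (\cite[Corollary 14.7]{levin2017markov}). Take $G_\Omega$ to be the graph on $\Omega_\Lam^\tau$ in which two configurations are adjacent iff they differ by exactly one center, and equip it with a pre-metric $\hat D$ to be chosen. Since every valid configuration contains at most $n$ centers (by the volume bound $|\mathbf{X}| \le |\Lam|$, spheres being disjoint of volume one), any two configurations can be joined in $G_\Omega$ by a path of length at most $2n$; choosing $\hat D$ bounded by $2^{d+1}$ will then yield $\mathrm{diam}(\Omega) \le 2^{d+2} n$, matching the logarithm appearing in the stated bound. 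To conclude via path coupling it suffices to establish a per-step contraction $\mathbb{E}[D(X_1, Y_1)] \le D(X_0, Y_0)(1 - \gamma/(4n))$ across every edge of $G_\Omega$.

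Fix an edge $\{X_0,\, X_0 \cup \{w\}\}$ and couple a single step of the two chains by using the same uniform point $x \in \Lam$ and the same add/remove action in both. Three cases arise. \emph{Good case:} if $x \in B_r(w)$ (volume $V_r = 1$) and the action is a removal, which together occur with probability $\tfrac{1}{n(1+\lam)}$, then $w$ is removed from the larger configuration and the chains coalesce, so $\Delta D = -\hat D(X_0, X_0 \cup \{w\})$. \emph{Bad case:} if the action is an addition at $x \in B_{2r}(w) \cap (\Lam_{\Int}\setminus\tau) \setminus \bigcup_{y \in X_0} B_{2r}(y)$, the update succeeds in $X_0$ but is blocked by $w$ in $X_0 \cup \{w\}$; since $\|x-w\|<2r$ forbids $\{x,w\}$ from coexisting in any valid configuration, any shortest $G_\Omega$-path from $X_0 \cup \{x\}$ to $X_0 \cup \{w\}$ must pass through $X_0$, so this contributes at most $\hat D(X_0, X_0 \cup \{x\})$ to $\Delta D$. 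In every remaining event the two chains perform identical updates and $\Delta D = 0$.

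A naive choice $\hat D \equiv 1$ then gives a bad contribution bounded by $\lam |B_{2r}| = 2(1-\gamma)$ against a good contribution of $1$, yielding contraction only for $\lam < 2^{-d}$ and thus falling short of the theorem by exactly a factor of two. To close this gap I will use an optimized pre-metric $\hat D(X, X\cup\{w\})$ that weights edges according to the free volume $\bigl|B_{2r}(w) \setminus \bigcup_{y\in X} B_{2r}(y)\bigr|$ around the differing center, mirroring the optimized-metric path coupling of Vigoda~\cite{vigoda2001note}. The geometric point to exploit is that any $x$ in the bad region satisfies $\|x-w\|<2r$, so $B_{2r}(x)$ overlaps substantially with $B_{2r}(w)$; this forces the free volume around $x$ in $X_0$ to be small on average over the bad region, which is precisely the continuous analogue of Vigoda's triangle-counting argument, with the role of the maximum degree $\Delta$ played by $V_{2r} = 2^d$.

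The main obstacle will be calibrating this weight function and verifying the resulting contraction inequality $\mathbb{E}[\Delta D] \le -\tfrac{\gamma}{4n}\hat D(X_0, X_0 \cup \{w\})$ uniformly in $X_0$, $w$, and $\tau$. This reduces to a sharp integral inequality on the bad region that quantifies the ``forced overlap'' effect above, and I expect this to be where essentially all of the technical work lies. Once the per-step contraction rate $\alpha = \gamma/(4n)$ is in hand, plugging $\alpha$ together with $\mathrm{diam}(\Omega) \le 2^{d+2}n$ into the path coupling theorem immediately produces the claimed mixing time bound, uniformly in the boundary condition $\tau$.
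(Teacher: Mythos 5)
Your high-level plan (path coupling with a free-volume–weighted pre-metric, à la Vigoda) is the right one, but two concrete issues in the setup would block the execution and cannot be repaired as stated.

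First, you build $G_\Omega$ on the \emph{valid} state space $\Omega_\Lam^\tau$. The paper instead works on the extended state space $\Omega^{\tau,*}_\Lam$ of configurations in which each point of $\Lam$ is covered at most twice, and this is not a cosmetic choice. In your ``bad case'' the blocking center $w$ and the attempted center $x$ are within distance $2r$, so $X_0\cup\{x,w\}\notin\Omega$. On $G_\Omega$ you are then forced to route the path from $X_0\cup\{x\}$ to $X_0\cup\{w\}$ through $X_0$, and the resulting bound on $\Delta D$ is $\hat D(X_0,X_0\cup\{x\}) = 2^d - c\,|O_{X_0}(x)|$. On the extended space one may instead route through $X_0\cup\{x,w\}\in\Omega^*$, and the triangle inequality gives the strictly better bound $2^d - c\,|B_{2r}(x)\cap\Gamma(X_0\cup\{w\})|$; the extra blocked volume comes precisely from the enforced overlap $B_{2r}(x)\cap B_{2r}(w)$. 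This gain is exactly what carries the argument past $\lam<2^{-d}$ to $\lam<2^{1-d}$. Without it you are back to the naive factor-two loss you yourself identify. (The extended space also changes the diameter bookkeeping: $|X|\le 2n$ for $X\in\Omega^*$, so paths have length $\le 4n$ while $\hat D\le 2^d$, again giving $2^{d+2}n$ but not by the route you describe.)

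Second, your case analysis asserts that whenever both chains perform the same update, $\Delta D = 0$. That is false once $\hat D$ depends on the free volume around the disagreeing center $w$. If a center is added near $w$ in both chains, $|O_X(w)|$ increases and $\hat D(X,X\cup\{w\})$ decreases; if centers are removed near $w$ in both chains, $|O_X(w)|$ decreases and $\hat D$ increases. The paper treats these as two additional events ($A_3$ and $A_4$): $A_3$ contributes favorably and $A_4$ contributes an adverse term of order $c\,|O_X(w)|/(n(1+\lam))$ that must be absorbed. Omitting them means the contraction inequality you aim to prove has the wrong left-hand side. Once both of these points are fixed — extended state space with pre-metric $\hat D(X,X\cup\{w\}) = 2^d - c\,|O_X(w)|$, $c=\lam 2^d/(2+\lam 2^d)$, and the four-way case analysis — the computation does close with $\E[\Delta] \le 2^d(2c-1)/(n(1+\lam)) = -\gamma 2^d/((2-\gamma)(1+\lam)n)$, and the stated mixing bound follows from the path-coupling theorem. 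The remaining work you deferred (calibrating $c$ and the ``sharp integral inequality'') is genuinely the cancellation $|U_X(w)|+|O_X(w)|=2^d$ together with $2c=\lam 2^d(1-c)$; it is short once the setup is correct, but the setup is where your proposal currently fails.
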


Before discussing the proof of this bound, we derive
Theorem~\ref{thmMCmixing} from it.

\begin{proof}[Proof of Theorem~\ref{thmMCmixing}]
  Let $\Lam$ be a compact measurable subset of $\R^d$ of volume
  $n$.  To show optimal temporal mixing with constants $b,c>0$, it is
  enough to show that with an arbitrary initial distribution $X_0$,
  $\| X_{\lfloor sn \rfloor} - \mu_{\Lam}^\tau \|_{TV} \le \frac{b}{2}
  n e^{-cs}$, and then use the triangle inequality to bound
  $\| X_{\lfloor sn \rfloor} - Y_{\lfloor sn\rfloor} \|_{TV}$.  In
  other words, setting $\eps =\frac{b}{2} n e^{-cs}$, we want to show
  $\tau_{\mathrm{mix}}(\eps)\le \lfloor sn \rfloor$. Taking
  $b= 2^{d+3}$ and $c=\gamma/4$ and applying Theorem~\ref{mixing} proves this. 
\end{proof}

To establish rapid mixing for the single-center dynamics, we follow
the approach of Vigoda for the discrete hard-core model on bounded
degree graphs~\cite{vigoda2001note}.  This approach makes use of an
extended state space $\Omega^{*}\supseteq \Omega$. In our setting, let
$\Omega^{\tau,*}_{\Lam}$ be the collection of all sets of centers
$X \subseteq \Lam_\Int$ such that each point in $\Lambda$ is covered
by at most two balls of radius $r$ with a center in $X$, i.e.
\begin{equation}
  \label{eqOmegaStar}
  X \in \Omega^{\tau,*}_{\Lam} \iff \text{ for all } x \in \Lambda,
  \quad |\{y \in X : \mathrm{dist}(x, y)< r  \} | \leq 2. 
\end{equation}
The purpose of this extended state space will become clear below when
we introduce a pre-metric.  Note that the boundary conditions $\tau$
play no role in the definition of $\Omega^{\tau,*}_{\Lam}$.  Next we
extend our definition of the single-center dynamics to
$\Omega^{\tau,*}_{\Lam}$.  At each step of the chain:
\begin{enumerate}
\item Pick $x \in \Lam$ uniformly at random.
\item With probability $1/ (1+\lambda)$, remove any $y \in X_t$ with
  $\mathrm{dist}(x,y)\le r$. That is, set
  $X_{t+1} = X_t \setminus B_{r}(x)$.
\item With probability $\lambda/ (1+\lambda)$, attempt to add a center
  at $x$.  Let $X' = X_t\cup \{x\}$. If $x \in \Lam_\Int \setminus \tau$ and
  $\mathrm{dist}(x, X_t) \ge 2r$, then set $X_{t+1} = X'$. If not, set
  $X_{t+1}= X_t$.  That is, we add a center at $x$ if it locally satisfies the
  packing constraints and boundary conditions.
\end{enumerate}  
If $X_t \in \Omega^\tau_{\Lam}$ then the chain will remain in
$\Omega_\Lam^\tau$ and the dynamics agree with the definition given in
Section~\ref{secIntro}.  In addition, a Markov chain that starts in
$\Omega^{\tau,*}_\Lam \setminus \Omega^\tau_{\Lam}$ will eventually
reach $\Omega^\tau_{\Lam}$. Since the chain has a unique invariant
measure when considered on the state space $\Omega_{\Lam}^\tau$, this
shows the chain also has the same unique invariant measure on
$\Omega^{\tau,*}_{\Lam}$, and that the mixing time of the chain on
$\Omega^{\tau,*}_\Lam$ is an upper bound for the mixing time of the
chain on $\Omega^\tau_{\Lam}$.

Throughout the remainder of this section, we fix the dimension $d$,
the region $\Lambda \subset \R^d$, and the boundary conditions
$\tau$. For simplicity we write $\Omega= \Omega_\Lam^\tau$ and
$\Omega^* = \Omega^{\tau,*}_\Lam$.

\begin{lemma}
  \label{lem:stat}
  The stationary distribution of the single-center dynamics on
  $\Omega$ is the distribution of the hard sphere model on $\Omega$.
\end{lemma}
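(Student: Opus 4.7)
The plan is to verify that $\mu_\Lam^\tau$ satisfies detailed balance with respect to the single-center dynamics, from which stationarity is immediate; uniqueness will then follow from irreducibility of the chain. The natural way to express detailed balance here is via the Janossy density of $\mu_\Lam^\tau$: writing $Z=Z_\Lam^\tau(\lam)$, the density on ordered $n$-tuples in $(\Lam_\Int\setminus \tau)^n$ is $j_n(x_1,\dots,x_n)=\lam^n Z^{-1}\prod_{i<j}\mathbf{1}[\norm{x_i-x_j}\geq 2r]$, and these satisfy the key recursion $j_{n+1}(X\cup\{y\})=\lam\, j_n(X)$ whenever $X\cup\{y\}\in \Omega$.

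The main computation is to compare the birth density and the death probability for a compatible pair $X$, $X\cup\{y\}\in \Omega$ (so $y\in\Lam_\Int\setminus\tau$ and $\mathrm{dist}(y,X)\geq 2r$). Starting from $X$, the chain attempts to add at a site $x$ by first picking $x$ uniformly in $\Lam$ (density $1/|\Lam|$) and then choosing to add with probability $\lam/(1+\lam)$; viewed as a density in $y$, the chance of proposing an addition at $y$ that is accepted is $\lam/((1+\lam)|\Lam|)$. Starting from $X\cup\{y\}$, the chain returns to $X$ exactly when the chosen $x$ lies in $B_r(y)$ and the remove option is selected. Crucially, since $X\cup\{y\}$ is a valid packing, any $x\in B_r(y)$ automatically satisfies $\mathrm{dist}(x,x')\geq r$ for all other $x'\in X$ by the triangle inequality, so no other sphere is deleted. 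Because $y\in\Lam_\Int$ implies $B_r(y)\subseteq \Lam$, the death probability is $\tfrac{1}{1+\lam}\cdot\tfrac{V_r}{|\Lam|}=\tfrac{1}{(1+\lam)|\Lam|}$, using the normalization $V_r=1$.

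Detailed balance then reduces to $j_n(X)\cdot\tfrac{\lam}{(1+\lam)|\Lam|}=j_{n+1}(X\cup\{y\})\cdot\tfrac{1}{(1+\lam)|\Lam|}$, which is precisely the Janossy recursion above. Any transition of the chain is a composition of such single-birth/single-death moves together with identity holds, so $\mu_\Lam^\tau$ is reversible and hence invariant. Irreducibility follows because the empty configuration is reachable from any $X\in \Omega$ by successive deletions (each nonempty $X$ has positive probability of losing a center in one step), and the empty configuration can reach any target $X$ with positive density by adding its centers one at a time; combined with the positive holding probability this yields a unique stationary measure, which therefore equals $\mu_\Lam^\tau$.

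The main subtle point I expect to be careful about is the mixed discrete-continuous nature of $\Omega$: the birth rate is a density in the new point $y$ while the death rate is an honest probability, so detailed balance must be formulated in the Janossy framework rather than as an equality of point masses. The normalization $V_r=1$ is what makes these densities and probabilities line up exactly with the Janossy ratio $\lam$, so it is essential rather than cosmetic.
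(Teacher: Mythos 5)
Your proof is correct and follows essentially the same approach as the paper's: verifying detailed balance (reversibility) for the single birth/death moves, with the ratio of forward to backward transition densities equal to $\lam$ matching the ratio of densities of $\mu_\Lam^\tau$. You spell out a detail the paper leaves implicit — namely that for $X\cup\{y\}\in\Omega$ the death probability is exactly $1/((1+\lam)|\Lam|)$ because $B_r(y)\subseteq\Lam$ and the packing constraint guarantees no other center is deleted — and you handle uniqueness via irreducibility, whereas the paper addresses uniqueness in a remark after the lemma by invoking Harris recurrence; both are adequate.
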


\begin{proof}
  Consider two distinct configurations $X,Y\in \Omega$.  
  The transition density from $X$ to $Y$ is proportional to $\mathbf 1_{|X \Delta Y|=1} \cdot \lam^{|Y|}$.
  Suppose without loss of generality that $Y = X \cup \{x \}$. Let
  $\pi$ denote the density of $\mu$, and let $\pi_U(V)$ denote the
  transition density from state $U$ to state $V$. Then
  $\pi(Y)/\pi(X) = \lam$, and $\pi_X(Y) / \pi_{Y}(X) =\lam$, and so
  the chain is reversible with respect to the hard sphere measure on
  $\Omega$.
\end{proof}
Since the single-center dynamics are a Harris recurrent chain, the
previous lemma implies that $\mu$ is the unique invariant measure for
the dynamics on $\Omega$, and that $p^{(t)}\to \mu$ for all initial
distributions $p^{(0)}$, see,
e.g.,~\cite[Section~3.2]{RobertsRosenthal}.

\subsection{Proof of Theorem~\ref{mixing}}
We begin with some preliminary definitions. For 
$X \in \Omega^*$ let 
\begin{equation}
  \Gamma(X)= \left(\Lam \setminus \Lam_\Int \right)\cup \tau \cup
  \left( \bigcup_{x \in X}B_{2r}(x) \right). 
\end{equation}
This is the `blocked volume' of a configuration $X$ where an
additional center cannot be placed.

For $v\in \Lam$ we write the ball $ B_{2r}(v) $ as the disjoint union
of the \emph{occupied} (or \emph{blocked}) set $O_X(v)$ and the
\emph{unoccupied} (or \emph{free}) set $U_X(v)$,
\begin{equation}
  O_X(v)= B_{2r}(v) \cap \Gamma(X), \qquad 
  U_X(v)= B_{2r}(v)\setminus \Gamma(X) .
\end{equation}

We now use these notions to define a graph and pre-metric on $\Omega^{*}$. For
$X, Y \in \Omega^*$, we say that $X$ and $Y$ are adjacent ($X \sim Y$)
if $X$ has exactly one more center than $Y$, and all the centers in
$Y$ are also in $X$ (or vice versa). That is, $X \sim Y$ if and
  only if $| X \Delta Y | =1$.  We will write $G_{\Omega^{*}}$
  for the graph whose edges are pairs of adjacent vertices, and we
define 
a pre-metric $\hat D(\cdot, \cdot)$ on this graph by
\begin{equation}
  \hat{D}(X, X \cup \{v\})= 2^d- c |O_X(v)|, \qquad c=\frac{\lambda2^d}{2+\lambda 2^d}.
\end{equation}
Note that since $c$ is in $[0,1/2]$ for $\lam$ in $[0, 2^{1-d}]$, we
have that $\hat D(X, Y)$ is in $[ 2^{d-1} ,2^d]$ for all edges
$\{X, Y \}$ of $G_{\Omega^{*}}$. Thus for each edge $\{X,Y\}$ we have
$\hat{D}(X,Y)\geq 1$, and $\hat D$ is a premetric since any path from
$X$ to $Y$ other than $\{X,Y\}$ has length at least $2^{d}$ under the
corresponding path metric.

The pre-metric $\hat D$ is the continuous analogue of the pre-metric
introduced by Vigoda in~\cite{vigoda2001note}. Defining the state
space to be $\Omega^*$ rather than $\Omega$ affects the metric
$D$. Consider a simple example with free boundary conditions in which
$\Lambda$ is a ball of radius $3r/2$. Then
$\Omega = \emptyset \cup \bigcup_{x \in \Lambda_\Int} \{\{x\}\}$. For
the state space $\Omega$ the graph of adjacent states is a star graph
with center $\emptyset$, and so for non-empty distinct
$X,Y \in \Omega$,
$D(X,Y)= \hat{D}(X,\emptyset)+ \hat{D}(Y,\emptyset) = 2^{d+1}$. In
contrast, for the state space $\Omega^*$, we have that
$D(X,Y)\leq \hat{D}(X,X \cup Y)+ \hat{D}(Y,X\cup Y) = 2^{d+1}(1- c)$.
This is relevant in our proof when we bound the distance between a
pair of configurations using the triangle inequality applied with a
third configuration that is in $\Omega^* \setminus \Omega$ (see
\eqref{use star}).

To apply Theorem~\ref{path coupling thm} we will couple adjacent
configurations using the following coupling.
\begin{defn}[The identity coupling for the single-center dynamics]
  The identity coupling for the single-center dynamics is defined as
  follows. If $X_t$ and $Y_t$ are separate instances of the single-center
  dynamics for $\mu_\Lam^\tau$ at time $t$, we couple them in a
  Markovian manner via the transition rule
  \begin{itemize}
  \item Choose a point $x\in\Lam$ uniformly at random.
  \item With probability $1/(1+\lam)$, in both $X_t$ and $Y_t$ delete
    any center in $B_r(x)$ to form $X_{t+1}$ and $Y_{t+1}$
    respectively.
  \item With probability $\lam/(1+\lam)$, attempt to add a center at
    $x$ in both $X_t$ and $Y_t$.
  \end{itemize}
\end{defn}

Consider $X,Y \in \Omega^*$ with $Y= X \cup \{v\}$. Let $X'$ and $Y'$
denote the resultant states after one step of the Markov chains
coupled according to the above identity coupling, and let 
\begin{equation}
  \Delta= D(X',Y')- D(X,Y)
\end{equation}
denote the random change in distance between configurations.  The next
lemma bounds the expectation of $\Delta$.
\begin{lemma}
  \label{expected-delta}
  Let $X,Y \in \Omega^*$ such that $Y= X \cup \{v\}$.  Let
  $\lambda= (1-\gamma) 2^{1-d}$, with $\gamma \in (0,1)$. Then
  \begin{equation}
    \label{change}
    \E\cb{\Delta}\leq \frac{2^d(2c-1)}{n(1+\lambda)}=  -  \frac{\gamma
      2^d}{(2-\gamma) (1+\lam) n}  <0.
  \end{equation} 
\end{lemma}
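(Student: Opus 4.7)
The plan is to decompose $\E[\Delta]$ according to the random choice of $x \in \Lam$ and the coin flip determining whether a deletion or addition is attempted, writing
\[
n(1+\lam)\E[\Delta] = \int_{\Lam} \Delta_{\mathrm{del}}(x)\,dx + \lam \int_{\Lam} \Delta_{\mathrm{add}}(x)\,dx,
\]
and then to bound each integral separately.

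For the deletion contribution, I would split according to whether $x \in B_r(v)$. If $x \in B_r(v)$ then $v$ is removed from $Y$ while any $X$-centers in $B_r(x)$ are removed from both chains, so the resulting configurations coincide; this region contributes exactly $-\hat D(X,Y) = -(2^d - c|O_X(v)|)$ over a volume $V_r = 1$. If $x \notin B_r(v)$ and $N_x := B_r(x) \cap X \ne \emptyset$, then $X' = X \setminus N_x$ and $Y' = X' \cup \{v\}$ remain adjacent and $\Delta = c(|O_X(v)| - |O_{X \setminus N_x}(v)|) \ge 0$. Swapping the order of integration rewrites this second contribution as
\[
c \int_{O_X(v)\setminus((\Lam \setminus \Lam_{\Int}) \cup \tau)} \Bigl| \bigcap_{w \in S_y} B_r(w) \cap \Lam \Bigr|\,dy,
\]
where $S_y := \{w \in X : y \in B_{2r}(w)\}$. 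The defining property of $\Omega^{\tau,*}_\Lam$ forces $|S_y| \le 2$ on the effective integration region---otherwise a common point of three such $r$-balls would violate the double-coverage constraint---so the inner volume is bounded by $V_r = 1$ and the deletion integral satisfies
\[
\int_{\Lam} \Delta_{\mathrm{del}}(x)\,dx \le -2^d + 2c|O_X(v)|.
\]

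For the addition contribution, the easy case is $x$ a valid addition in both chains, giving $\Delta = -c|U_X(v) \cap B_{2r}(x)|$. The delicate case is $x \in U_X(v) \cap \Lam_{\Int} \setminus \tau$, where the addition succeeds only in $X$, so $X' = X \cup \{x\}$ and $Y' = Y$ are no longer adjacent. Here I would apply the triangle inequality through the intermediate configuration $X \cup \{x, v\}$; the key point, which explains the extension to $\Omega^{\tau,*}_\Lam$, is that $X \cup \{x, v\} \in \Omega^{\tau,*}_\Lam$: any triple overlap would have to occur in $B_r(x)$, but $\mathrm{dist}(x, X) \ge 2r$ forces $B_r(x) \cap \bigcup_{w \in X} B_r(w) = \emptyset$. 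Expanding $|O_{X \cup \{x\}}(v)| = |O_X(v)| + |B_{2r}(x) \cap U_X(v)|$ gives $\Delta \le 2^d - c|U_X(v) \cap B_{2r}(x)| - c|O_Y(x)|$. To absorb the positive $2^d$ term I would use the elementary identity
\[
|F_X \cap B_{2r}(y)| + |O_Y(y)| = 2^d + |B_{2r}(y) \cap U_X(v)| \ge 2^d \qquad (y \in U_X(v)),
\]
where $F_X := \Lam \setminus \Gamma(X)$ denotes the set of valid addition locations for $X$; this follows at once from $\Gamma(Y) = \Gamma(X) \cup B_{2r}(v)$ together with $B_{2r}(v) \setminus \Gamma(X) = U_X(v)$. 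Combining the two sub-cases, so that the $-c|U_X(v) \cap B_{2r}(x)|$ terms integrate cleanly over $F_X = (F_X \cap F_Y) \cup U_X(v)$, and then applying Fubini together with the identity, yields $\int_{\Lam}\Delta_{\mathrm{add}}(x)\,dx \le 2^d(1-c)|U_X(v)|$.

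Finally, combining the two bounds and using $|O_X(v)| + |U_X(v)| = V_{2r} = 2^d$ produces an expression affine in $|O_X(v)|$. The definition $c = \lam 2^d/(2+\lam 2^d)$ is calibrated so that $2c = \lam 2^d(1-c)$ and hence the coefficient of $|O_X(v)|$ vanishes; a short algebraic simplification then reduces the remaining constant to $2^d(2c-1)$, giving the claimed bound. I expect the main obstacle to be the non-adjacent case in the addition step: both the extension to $\Omega^{\tau,*}_\Lam$ (which makes the triangle inequality through $X \cup \{x,v\}$ available) and the geometric identity above (which is what cancels the large $2^d|U_X(v)|$ term) are essential, since without them the positive addition contribution would dwarf the negative coalescence contribution from the deletion. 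Some minor care is also needed with boundary corrections arising from $\Lam \setminus \Lam_{\Int}$ and $\tau$, but these can only enlarge $|O_Y(y)|$ and $\Gamma(X) \cap B_{2r}(y)$ and so strengthen the relevant inequalities.
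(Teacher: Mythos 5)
Your proof is correct and follows essentially the same approach as the paper: the same case split into deletion and addition moves, the same use of the triangle inequality through the intermediate configuration $X\cup\{v,x\}\in\Omega^{\tau,*}_{\Lam}$ (with the same justification for membership in $\Omega^{\tau,*}_{\Lam}$), and the same final algebra driven by the calibration $2c=\lam 2^d(1-c)$. The minor bookkeeping differences --- reaching the deletion bound $c|O_X(v)|$ via Fubini and the double-coverage constraint rather than the paper's ``closest blocking center'' argument, and retaining the term $c|B_{2r}(x)\cap U_X(v)|$ via your geometric identity where the paper instead bounds $D(Y\cup\{x\},X\cup\{x\})\le D(X,Y)$ and then uses $|B_{2r}(x)\cap\Gamma(Y)|+|B_{2r}(x)\setminus\Gamma(Y)|=2^d$ --- produce the same intermediate estimates.
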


\begin{proof}
  Let $Y=X\cup\{v\}$. The change in distance $\Delta$ is a random variable whose
  value is a function of the current configurations of the chains, the
  random point $w$ chosen in a single step of the coupling, and
  whether or not the coupling tries to add or remove spheres.  We
  begin with a case analysis of $\Delta$.  Throughout the proof we
    will use that $\hat D(X,Y) = D(X,Y)$ for edges $\{X,Y\}$. 
  \begin{enumerate}%[(i)]
  \item Let $A_1$ be the event the center $v$ is removed from $Y$,
    i.e., the chain removes spheres and $w$ lies within distance $r$
    of $v$.  The probability of this event is $1/(n(1+\lam))$.  After
    $A_1$ occurs, $X'=Y'$, and so $\Delta = -D(X,Y)$.  It follows that
    \begin{align}
      \label{eq:Delta1}
      \E\cb{\Delta \cdot 1_{A_{1}}} &= -\frac{1}{n(1+\lam)}D(X,Y)=
                                      -\frac{2^d-c|O_X(v)|}{n(1+\lam)} 
    \end{align}
		
  \item Let $A_2$ denote the event that a center is added to $X$ but
    not $Y$. This occurs when $w$ lies in $U_X(v)$ and the coupling
    attempts to add a sphere, as $U_{X}(v)$ is blocked in $Y$ and not
    blocked in $X$.  In this case we have
    $\Delta = D(X \cup \{w\},Y)- D(X,Y).$ It follows that
    \begin{align}
      \label{eqDelta2}
      \E\cb{\Delta \cdot 1_{A_{2}}} 
      &= \frac{\lam}{n(1+\lam)}\int_{U_{X}(v)} (D(X\cup\{w\},Y)-D(X,Y))\, dw.
    \end{align}
	
  \item Let $A_3$ be the event that a new center $w$ is added to both
    $X$ and $Y$. Note that this event only occurs when
    $w \in \Lam \setminus \Gamma(Y)$ and the coupling adds a center.
    In this case
    \begin{equation*}
      \Delta= -c |\{x \in U_X(v): \text{ $x$ is blocked by the new
        center $w$}\}|.
    \end{equation*}
    For $x \in U_X(v)$, let $A_3^x$ be the event that $x$ becomes
    blocked by the new center, i.e., that $X'=X \cup \{w\}$,
    $Y'=Y\cup\{w\}$ and $x \in O_{X \cup \{w\}}(v)$. In order for the
    event $A_3^x$ to occur, it must be the case that
    $w \in B_{2r}(x) \setminus \Gamma(Y)$. Hence
    \begin{align}
      \nonumber
      \E\cb{\Delta \cdot 1_{A_{3}}} 
      &=  \E\cb{\int_{U_x(v)} - c 1_{A_3^x}\, dx} \\
      \nonumber
      &= -\frac{c\lam}{n(1+\lam)} \int_{U_x(v)} \int_{\Lam} \, 1_{w \in
        B_{2r}(x) \setminus \Gamma(Y)} \,dw\, dx \\ 
      \label{eq:Delta3}
      &=  -\frac{c\lam}{n(1+\lam)}\int_{U_{X}(v)}\abs{B_{2r}(x)\setminus\Gamma(Y)}\,dx
    \end{align}
		
  \item Let $A_4$ be the event that at least one center is removed in
    both $X$ and $Y$, and $v$ is not removed. Let $S_w$ be the set of
    centers removed; since $w \not \in B_r(v)$ we have
    $S_w= X \cap B_r(w)= Y \cap B_r(w)$.  In this case,
    \begin{equation*}
      \Delta= c |\{x \in O_X(v): \text{ $x$ is no longer blocked after
        $S_w$  is removed}\}|. 
    \end{equation*}
    For $x \in O_X(v)$, let $A_4^x$ be the event that
    $X'= X \setminus S_w$, $Y'= Y \setminus S_w$, and
    $x \in U_{X\setminus S_w}(v)$.  If $A_4^x$ occurs there is a
    center $b_x\in X$ that is the closest center to $x$ that blocks
    $x$. In particular, $b_x\in S_w$, and hence $w\in B_r(b_x)$. Using
    this observation we obtain
    \begin{align}
      \nonumber
      \E\cb{\Delta \cdot 1_{A_{4}}} &= \E \cb{
                                      \int_{O_X(v)} c 1_{A_4^x}\, dx} \\
      \nonumber
                                    &\leq \frac{c}{n(1+\lam)} \int_{\Lam}\int_{O_X(v)} \, 1_{w
                                      \in B_r(b_x) }  \, dx \,dw\\
      \label{eq:Delta4}
                                    &=\frac{c\abs{O_{X}(v)}}{n(1+\lam)}.
    \end{align}
  \end{enumerate}
	
  The events $A_1, A_2, A_3,$ and $A_4$ are mutually exclusive and
  exhaustive, so
  \begin{equation}
    \label{delta sum}
    \E[\Delta]=\E\cb{\Delta \cdot
      \sum_{i=1}^{4}1_{A_{i}}} . 
  \end{equation}
  To derive an upper bound on $\E[\Delta]$ we first need to estimate
  the integrand in~\eqref{eqDelta2}.  We will use the triangle
  inequality with the configurations $Y\cup \{w\}$, $X \cup \{w\}$,
  and $Y$.  Temporarily deferring the justification of the use of the
  triangle inequality, note that since $c\geq 0$,
  $D(Y \cup \{w\},X\cup \{w\}) \leq D(Y, X)$. Further, by definition,
  $D(Y \cup \{w\}, Y)= 2^d-c |B_{2r}(w) \cap \Gamma(Y)| $. Hence by
  the triangle inequality
\begin{align} 
  \label{use star} 
  D(X \cup \{w\}, Y)- D(X,Y) 
  &\leq D(Y \cup \{w\},X\cup \{w\}) \nonumber  
       +D(Y \cup \{w\}, Y)-D(X,Y)  
      \nonumber \\
    &\leq 2^d-c |B_{2r}(w) \cap \Gamma(Y)|.
\end{align}
To justify this use of the triangle inequality we must establish that
$X \cup \{v, { w} 
\} \in \Omega^*$.  Note that no point of $\Lam$ is
covered by three balls of radius $r$ whose centers are in $Y$ because
$Y\in \Omega^*$. No point that is covered by $B_{r}({ w})$  
is covered by
$B_r(u)$ for some $u \in X$ since $ w$ 
is added to $X$ by the Markov
chain. It follows that no point in $\Lam$ is covered three times by
$Y\cup \{  w\}$,  i.e., $Y\cup  \{ w\} 
\in \Omega^*$.

Inserting the estimates given in \eqref{eq:Delta1}--\eqref{eq:Delta4}
into \eqref{delta sum} we obtain
\begin{align*}
  \E\cb{\Delta}
  &\leq \frac{1}{n(1+\lambda)}\bigg( -(2^{d}-c|O_{X}(v)|)
     + \lam\int_{U_{X}(v)}(2^{d}-c|B_{2r}(w)\cap \Gamma(Y)|)\,dw \\
               &  \qquad\qquad\qquad\qquad - c \lambda \int_{U_X(v)} 
                 |B_{2r}(x) \setminus\Gamma(Y)| \,dx 
                 + c|O_X(v)|\bigg)\\
               &=\frac{1}{n(1+\lambda)} \left(-2^d +2c |O_X(v)|
                 +\lambda 2^d (1-c)|U_X(v)| \right),
\end{align*}
where the last line follows from $\abs{B_{2r}(x)\cap\Gamma(Y)}+\abs{B_{2r}(x)\setminus\Gamma(Y)}=2^{d}$.
Since $|U_X(v)|+|O_X(v)|=2^d$ and $2c=\lambda 2^d (1-c)$, it follows
that
\begin{equation*}
  \E\cb{\Delta}\leq \frac{2^d(2c-1)}{n(1+\lambda)}=  -  \frac{\gamma 2^d}{(2-\gamma) (1+\lam) n} \, .\qedhere
\end{equation*} 
\end{proof}

Now we deduce Theorem~\ref{mixing} from Theorem~\ref{path coupling thm}.
\begin{proof}[Proof of Theorem~\ref{mixing}]
  First we bound the diameter of $\Omega^*$ with respect to $D(\cdot, \cdot)$. Note that if
  $X \in \Omega^*$ then $|X|\leq 2n$ since each ball covers one unit
  of volume and each point cannot be covered more than
  twice. Recall that we
  defined the graph $G_{\Omega^{*}}$
by putting an edge 
    between
     configurations that differ by exactly one
    sphere location. It
  follows that the combinatorial diameter of $G_{\Omega^{*}}$ 
  is bounded above by $4n$. For two adjacent states $X$ and $Y$,
  $D(X, Y)=\hat{D}(X,Y)\le 2^d$, and hence
  $\text{diam}(\Omega^{*})\leq n2^{d+2}$.  Note also that
    $G_{\Omega^{*}}$ is connected since there is a path from any
    configuration to the empty configuration.

  Next we find a suitable value for $\alpha$ in the statement of
  Theorem~\ref{path coupling thm}.  Let $X_0= X $ and $Y_0= X \cup
  \{v\}$. Applying Lemma~\ref{expected-delta} we obtain 
  \begin{align*}
\E \cb{D(X_1, Y_1)} 
     & =D(X_{0}, Y_{0}) \ob{1 + \frac{\E \cb{\Delta(X_0, Y_0)}}{D(X_{0},Y_{0})}} \\
     &\le D(X_{0}, Y_{0}) \ob{1 - \frac{\gamma}{n (2-\gamma)(1+\lam) }}  \\
     &\le D(X_{0}, Y_{0}) \exp \left [  - \frac{\gamma}{n (2-\gamma)(1+\lam) } \right ]  \\
     &\le D(X_{0}, Y_{0}) e^{  - \frac{\gamma}{4 n }} \, .
\end{align*}
The first inequality used that $\E[\Delta]<0$ and
$D(X_0,Y_0) \le 2^d$, and the last used that $1+\lam \le 2$.
Since $\hat D$ is a pre-metric, we have verified the hypotheses of
Theorem~\ref{path coupling thm} with $\alpha= \gamma/ 4n$, which gives
Theorem~\ref{mixing}.
\end{proof}

\section{Spatial and temporal mixing}
\label{secSpatial}

In this section we prove Theorems~\ref{thmSingleTempToSpatial}
and~\ref{thm:heatbathConverse} following the approach of Dyer,
Sinclair, Vigoda, and Weitz~\cite{dyer2004mixing} who proved similar
results for the discrete hard-core model on the integer lattice
$\Z^d$.  At the heart of this technique is the idea of disagreement
percolation, bounding the distance that a disagreement between two
copies of a Markov chain can typically spread in a fixed number of
steps.  This idea appeared in~\cite{stroock1992logarithmic} in the
context of spatial and temporal mixing with further refinements and
applications due to van den Berg~\cite{van1993uniqueness}.

The main complication in the continuous setting is that there is a
  richer variety of domains and boundary conditions to consider. To
  handle this we will need the following lemma about the volume of
parallel sets in Euclidean space.
\begin{lemma}[Fradelizi--Marsiglietti~\cite{fradelizi2014analogue}]
  \label{lemParallelVolume}
  Suppose $L \ge r$, then
  \begin{equation*}
    |\Lam^{(L)} | \le  \frac{L^d}{r^d} |\Lam^{(r)}| \,.
  \end{equation*}
  In particular, for $L \ge r$ we have
  $|\Lam_{\mathrm{Int}}^{(L)}| \le \frac{L^d}{r^d} | \Lam|$.
\end{lemma}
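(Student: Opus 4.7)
The plan is to establish the stronger monotonicity statement: for any bounded measurable $\Lam \subset \R^d$, the function $\phi(t) := |\Lam^{(t)}|/t^d$ is non-increasing on $(0,\infty)$. The first assertion of the lemma is then the comparison $\phi(L) \le \phi(r)$, and the ``in particular'' assertion is the special case $\Lam \leftarrow \Lam_{\mathrm{Int}}$ combined with the identity $\Lam_{\mathrm{Int}}^{(r)} = \Lam$ recorded in Section~\ref{sec:notation-conventions}.

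Since $t \mapsto |\Lam^{(t)}|$ is locally Lipschitz on $(0,\infty)$ (from $\Lam^{(t)} = \Lam^{(s)} + \overline{B_{t-s}(0)}$), it is a.e.\ differentiable, and by a coarea-type argument its derivative equals the outer Minkowski content $\mathcal{H}^{d-1}(\partial \Lam^{(t)})$ at a.e.\ $t > 0$. Monotonicity of $\phi$ is therefore equivalent to the isoperimetric-type inequality
\[
  t \, \mathcal{H}^{d-1}(\partial \Lam^{(t)}) \;\le\; d \, |\Lam^{(t)}| \qquad \text{for a.e.\ } t > 0,
\]
as one sees by computing $\phi'(t)/\phi(t)$.

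The heart of the proof is this inequality, which I would attack via the divergence theorem applied to the radial vector field $F(x) := (x - y(x))/t$ on $\Lam^{(t)}$, where $y(x) \in \Lam$ is a measurable selection of nearest points. On $\partial \Lam^{(t)}$ one has $|x - y(x)| = t$ and the outward unit normal is $(x - y(x))/t$, so $F \cdot n \equiv 1$ on the boundary and the flux of $F$ equals $\mathcal{H}^{d-1}(\partial \Lam^{(t)})$. Meanwhile $\nabla \cdot F = (d - \mathrm{tr}\,\nabla y(x))/t$, so the inequality reduces to the geometric fact that the nearest-point retraction satisfies $\mathrm{tr}\,\nabla y \ge 0$ wherever differentiable---reflecting that such a retraction is locally a projection onto (a piece of) $\Lam$ and therefore compresses rather than expands infinitesimal volumes. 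The main obstacle will be the regularity of the nearest-point map for arbitrary measurable $\Lam$; I would handle this by first approximating $\Lam$ by finite point sets, so that $\Lam^{(t)}$ becomes a finite union of $t$-balls on which the divergence computation is explicit (and the inequality can be verified by induction on the number of balls using coverage arguments on the unit sphere), and then passing to the limit using semicontinuity properties of Minkowski content on parallel sets, as in Fradelizi--Marsiglietti~\cite{fradelizi2014analogue}.
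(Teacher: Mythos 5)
Your plan takes a genuinely different route from the paper. The paper simply quotes Fradelizi and Marsiglietti's Proposition~2.1 --- for compact $\Lam$ and compact convex $B$, the map $s\mapsto |s\Lam+B|-s^{d}|\Lam|$ is non-decreasing --- and extracts the lemma in three lines of algebra (set $B=B_{L}(0)$, compare $s=1$ and $s=L/r$, and rescale). You instead want to re-derive the underlying monotonicity of $t\mapsto |\Lam^{(t)}|/t^{d}$ from scratch; your reduction, via the coarea formula, to the inequality $t\,\mathcal{H}^{d-1}(\partial\Lam^{(t)})\leq d\,|\Lam^{(t)}|$ is correct, and a direct proof of this inequality is indeed possible.

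There is, however, a genuine gap at the central step, the claim $\mathrm{tr}\,\nabla y\geq 0$. You argue that the nearest-point retraction ``compresses rather than expands infinitesimal volumes,'' but (i) volume compression is a statement about the determinant of $\nabla y$, not its trace, and (ii) the compression claim is itself false for non-convex $\Lam$: if $\Lam$ is a piece of a sphere of radius $R$ and $x$ sits at distance $R-\varepsilon$ from it on the concave side, the nearest-point map dilates tangential directions by the unbounded factor $R/\varepsilon$. The statement $\mathrm{tr}\,\nabla y\geq0$ is nonetheless true, but the right reason is semiconcavity: $x\mapsto\mathrm{dist}^{2}(x,\Lam)-|x|^{2}=-\sup_{p\in\Lam}\bigl(2x\cdot p-|p|^{2}\bigr)$ is concave, so $\nabla y=I-\tfrac12\nabla^{2}\mathrm{dist}^{2}$ is a non-negative symmetric matrix wherever it is defined, and the singular part of the distributional Hessian on the medial axis also has the favourable sign --- which is exactly what is needed to push the divergence theorem across the jump set of $y$, a point your sketch elides. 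For the finite-point reduction, the transparent argument is not the induction you gesture at but a Voronoi decomposition: apply the divergence theorem to $(x-p_{i})/t$ on each cell $B_{t}(p_{i})\cap V_{i}$ and observe that the flux through the Voronoi facets is non-negative since $(x-p_{i})\cdot(p_{j}-p_{i})=\tfrac12|p_{i}-p_{j}|^{2}>0$ on the facet between $V_i$ and $V_j$. With these repairs your approach goes through, but at far greater cost than the paper's citation-plus-algebra.
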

\begin{proof}
  For $\Lam, B \subset \R^d$ compact with $B$ convex, Fradelizi and
  Marsiglietti~\cite[Proposition 2.1]{fradelizi2014analogue} proved
  that the function $|s\Lam + B| -s^d |\Lam|$ is non-decreasing
  and continuous as a
  function of $s$ on $\R_+$, where $s\Lam+B$ is the Minkowski sum of
  $s\Lam$ and $B$. 
  In particular,
  \begin{equation*}
    \abs{\Lam + B_{L}(0)} - \abs{\Lam} \leq \abs{\frac{L}{r}\Lam +
      B_{L}(0)} - \left(\frac{L}{r}\right)^{d}\abs{\Lam},
  \end{equation*}
  where we have used the continuity in $L$ to obtain the result for the
    open ball $B_{L}(0)$ instead of its closure.  Since $L\ge r$ this
  implies
  \begin{align*}
 \abs{\Lam + B_{L}(0)} \leq \abs{\frac{L}{r}\Lam + B_{L}(0)}  = \left(
    \frac{L}{r}\right)^d \abs{\Lam + B_{r}(0)} \,, 
  \end{align*}
  and the first claim follows since $\Lam^{(L)} = \Lam + B_{L}(0)$ and
  $\Lam^{(r)} = \Lam + B_{r}(0)$. The second claim follows since
  $\Lam = \Lam_{\mathrm{Int}}^r$.
\end{proof}

\subsection{From temporal to spatial mixing}
\label{sec:preparatory-lemmas}
The following lemma bounds how fast a disagreement between two
copies of the single-center dynamics can spread. This is a continuum
variant of~\cite[Lemma 3.1]{dyer2004mixing}.

For $\Lam'\subset \Lam \subset \R^d$ we write $X_t\cb{\Lam'}$ to
denote the projection of $X_{t} \subset \Lam_{\mathrm{Int}}$ to the
set $\Lam'_{\mathrm{Int}}$. In words, $X_{t}[\Lam']$ is the set
  of centers of spheres in $X_{t}$ that are entirely contained in
  $\Lam'$. 
\begin{lemma} 
  \label{lem:infPropagationSingle}
  Let $X_{t}$ and $Y_{t}$ be two copies of the single-center dynamics
  for the hard sphere model on $\Lam$ with boundary conditions
  $\tau_X$ and $\tau_Y$ and initial conditions
  $X_0,Y_0 \in \Omega_{\Lam}^{\tau,*}$. Suppose both
  $X_{0}\triangle Y_{0}$ and $\tau_{X}\triangle\tau_{Y}$ are contained in
  $A\subset\Lam$. Let $B \subset \Lam$ with
  $ \mathrm{dist}(A,B_{\mathrm{Int}}) = s >0$. Then for all positive
  $\eta \le \frac{ s}{e^{2}r \cdot 4^{d+1}}$, under the identity
  coupling we have
  \begin{equation}
    \Pr\cb{ X_{\lfloor \eta n \rfloor}[B] \ne Y_{ \lfloor \eta n \rfloor} [B]}  \le
     |B| \cdot  e^{-s/(4r)},
  \end{equation}
  where $n = | \Lam|$.   
  
\end{lemma}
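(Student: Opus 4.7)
The plan is to adapt the disagreement-percolation / space--time path argument of Dyer--Sinclair--Vigoda--Weitz~\cite{dyer2004mixing} to the continuum setting. The foundational observation is that under the identity coupling both chains consult the same sequence of uniform positions $X_{1},X_{2},\ldots$ in $\Lam$ and the same add/remove decisions, so a fresh disagreement at time $t+1$ at a location $p\notin (X_{t}\triangle Y_{t})\cup(\tau_{X}\triangle\tau_{Y})$ can appear only when the update is an add, $p=X_{t+1}$, and the addition succeeds in one chain but fails in the other. Such a discrepancy requires either a pre-existing center disagreement inside $B_{2r}(X_{t+1})$ or a boundary disagreement at $X_{t+1}\in\tau_{X}\triangle\tau_{Y}$. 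A remove step, on the other hand, can never create a disagreement since both chains delete whatever centers lie in $B_{r}(X_{t+1})$.

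Iterating this local propagation rule I would establish the structural claim: if $X_{T}[B]\neq Y_{T}[B]$, then there exist times $1\le s_{1}<\cdots<s_{k}\le T$ and positions $q_{i}:=X_{s_{i}}$ with $q_{1}\in A^{(2r)}$, $|q_{i}-q_{i-1}|\le 2r$ for $i\ge 2$, and $q_{k}\in B_{\mathrm{Int}}$. Because $\mathrm{dist}(A,B_{\mathrm{Int}})=s$ and each link covers at most $2r$, the chain length must satisfy $k\ge s/(2r)$. The $X_{s_{i}}$ are i.i.d.\ uniform on $\Lam$ of volume $n$, so integrating the chain backward from $q_{k}\in B_{\mathrm{Int}}$ (contributing $|B|/n$) through $k-1$ intermediate balls of volume $V_{2r}=2^{d}$, and union-bounding over the chain length and the choice of times, gives
\begin{equation*}
\Pr\cb{X_{\lfloor \eta n\rfloor}[B]\neq Y_{\lfloor \eta n\rfloor}[B]}\ \le\ \sum_{k\ge s/(2r)}\binom{\lfloor \eta n\rfloor}{k}\cdot\frac{|B|}{n}\cdot\ob{\frac{2^{d}}{n}}^{k-1}.
\end{equation*}
Applying $\binom{T}{k}\le (eT/k)^{k}$ with $T\le\eta n$ reduces this to a geometric series whose ratio $e\eta 2^{d}/k$ is, by the hypothesis on $\eta$, at most $1/e$ for every admissible $k$; the tail sums to the desired $|B|\cdot e^{-s/(4r)}$. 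A slightly looser chain tolerating $|q_{i}-q_{i-1}|\le 4r$ (and thus balls of volume $V_{4r}=4^{d}$) aligns the constants with the stated hypothesis $\eta\le s/(e^{2}r\cdot 4^{d+1})$ precisely.

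The main obstacle is the deterministic combinatorial step rather than the probabilistic one. One must verify carefully that in the extended state space $\Omega^{*}$---which permits up to two overlapping centers at any point---the per-step propagation bound survives the richer class of admissible configurations; that remove moves strictly destroy disagreements rather than transporting them; and that every way a disagreement can arrive in $B_{\mathrm{Int}}$ really is captured by an add-based chain traceable to the initial data in $A\cup(\tau_{X}\triangle\tau_{Y})$. With the structural lemma in place, the remaining union bound and Stirling/geometric-series computation is routine.
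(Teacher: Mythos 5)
Your proof is correct and follows essentially the same space--time disagreement-percolation argument as the paper: identity coupling, the observation that only add moves can create disagreements and only when a pre-existing disagreement lies within $2r$ of the update point, a chain bound traced back to $A$, and a union bound over chain lengths and update times via $\binom{T}{k}\le (eT/k)^{k}$. The only quantitative difference is that you track a chain of update \emph{points} separated by at most $2r$ (per-step volume $2^{d}$, chain length $\geq s/(2r)$), whereas the paper tracks overlapping update \emph{balls} of radius $2r$ whose centers may be up to $4r$ apart (per-step volume $4^{d}$, chain length $\geq \lceil s/(4r)\rceil$); your version is in fact slightly sharper, and, as you note, both close under the stated hypothesis on $\eta$.
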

\begin{proof}[Proof of Lemma \ref{lem:infPropagationSingle}]
  We couple $X_{t}$ and $Y_{t}$ via the identity coupling. Say
  $t^{\prime}$ is the smallest $t$ so that $X_{t} [B] \ne Y_{t} [B]$.
  That is, there is a center in $B_{\mathrm{Int}}$ in one
  configuration but not the other.  Since removing a center will not
  create a disagreement, at step $t^{\prime} $ exactly one of the
  Markov chains must add a center at some $w\in B_{\mathrm{Int}}$.  In
  order for the update point $w$ to be added to only one of the Markov
  chains, it must be that $B_{2r}(w)$ contains a point $y$ of
  disagreement, meaning that $y$ is a center in one of the configurations but not the other. 

  Proceeding further, if
  $X_{\lfloor \eta n \rfloor}[B] \ne Y_{\lfloor \eta n \rfloor}[B]$
  then there must be a connected (overlapping) chain of balls of
  radius $2r$ joining $A$ to $B_{\mathrm{Int}}$ with the property that
  there is a point of disagreement in each update ball.  In
  particular, the balls must be ordered in time to propagate a
  disagreement forward.  We call such a chain of balls an
  \emph{ordered chain}.  With $s = \mathrm{dist}(A,B_{\mathrm{Int}})$,
  there must be an ordered chain of at least
  $m = \lceil \frac{s}{4r} \rceil$ balls connecting $A$ to
  $B_{\mathrm{Int}}$.

  In any ordered chain each ball must intersect the last ball added to
  the chain, so the probability of extending a chain of balls of
  radius $2r$ by one ball is at most $4^d/n$. The probability of
  forming an ordered chain of $\ell$ balls of radius $2r$ with the
  final ball centered in $B_{\mathrm{Int}}$ is thus at most
  \begin{equation}
    \binom{\lfloor \eta  n \rfloor} { \ell} \frac{\abs{B_{\mathrm{Int}}}}{n} \ob{
  \frac{4^d}{n}}^{\ell-1},
  \end{equation}
  where we have neglected the constraint that a disagreement must be
  created in each update ball.  This upper bounds the probability of a
  disagreement in $B$ at time $\lfloor \eta n \rfloor$ by
  \begin{equation*}
    \frac{\abs{B_{\mathrm{Int}}}}{4^d}
    \sum_{\ell =  m }^{\lfloor \eta n \rfloor} \binom {\lfloor \eta n \rfloor} { \ell}
    \ob{\frac{4^d}{n}}^{\ell} 
    \leq
    \frac{\abs{B_{\mathrm{Int}}}}{4^d}
    \sum_{\ell =  m }^{\infty}
    \ob{\frac{e \eta4^d}{\ell}}^{\ell}   
    \leq \frac{\abs{B_{\mathrm{Int}}}}{4^d} \sum_{\ell =  m }^{\infty} \ob{\frac{s }{4re  \ell}}^{\ell}  \,.
  \end{equation*}
  The first inequality used ${M\choose \ell} \leq (eM/\ell)^{\ell}$ 
  and the second used the hypothesis on $\eta$, i.e., $\eta < \frac{
    s}{e^{2}r  \cdot 4^{d+1}}$. The ratio of consecutive
    terms in the summation is at most $1/e$, so
the entire series is bounded by twice the first term. This gives an upper bound of 
\begin{equation*}
  \frac{2\abs{B_{\mathrm{Int}}}}{4^d} \ob{\frac{s }{4er m}}^{m} \leq
  \frac{2\abs{B_{\mathrm{Int}}}}{4^d}e^{-s/(4r)}  \le |B|
  e^{-s/(4r)}  \,,
\end{equation*}
where the first inequality is due to the definition of $m$ as a ceiling.
\end{proof}
\begin{remark}
  \label{rem:BC}
  An inspection of the preceding proof reveals that it also applies to
  the situation in which the boundary conditions
  $\tau_{X}=\tau_{X}(t)$ and $\tau_{Y}=\tau_{Y}(t)$ change in time,
  provided $\tau_{X}(t)\triangle\tau_{Y}(t)\subset A$ for all $t$. In
  this situation the configurations are in
  $\Omega^{\tau,*}_{\Lam}$ as they may not satisfy the boundary
  conditions. 
\end{remark}

The next lemma shows that optimal temporal mixing implies what is
called \emph{projected optimal mixing}~\cite[Lemma
4.1]{dyer2004mixing}. Recall the definition of $\norm{\cdot}_{\Lam'}$
from Section~\ref{sec:SSM-I}.
\begin{lemma} 
  \label{lem:Proj-MixingSingle}
  If the single-center dynamics has optimal temporal mixing on $\R^d$
   with constants $b, c>0$ then there
  exist constants $b^{\prime},c^{\prime}>0$ such that, for any
  compact measurable $\Lam \subset \R^d$, any boundary condition $\tau$,
  any two instances $X_{t}$ and $Y_{t}$ of the dynamics on
  $\Omega_\Lam^\tau$, and any measurable $\Lam' \subset \Lam$,
  we have that
  \begin{equation*}
    \norm{X_{\lfloor \eta n\rfloor} - Y_{\lfloor \eta n
        \rfloor}}_{\Lam'}  
    \leq b^{\prime}\abs{\Lam'} e^{-c^{\prime}\eta}
  \end{equation*}
  for any $\eta > e^{-2} 4^{-d-1}$, where $n = \abs{\Lam}$.  The same
  conclusion also holds if
  $X_0, Y_0 \in \Omega_{\Lam}^{\tau,*}$, as long as
  $X_0[A_R],Y_0[A_R] \in \Omega_{A_R}$, with $A_R$ as defined below in
  the proof.
\end{lemma}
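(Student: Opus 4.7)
The plan is to combine optimal temporal mixing on a thickened neighborhood of $\Lam'$ with the disagreement-propagation bound of Lemma~\ref{lem:infPropagationSingle}. The intuition is that disagreements far from $\Lam'$ cannot reach $\Lam'$ within $T=\lfloor \eta n\rfloor$ updates, while disagreements inside a local neighborhood of $\Lam'$ can be mixed away by OTM applied to the smaller volume. I would set $R=4r\cdot e^{2}\cdot 4^{d}\,\eta$, so that $R\ge r$ whenever $\eta>e^{-2}4^{-d-1}$ and Lemma~\ref{lem:infPropagationSingle} applies with propagation distance $R$, giving errors that are exponentially small in $\eta$. Define $A_R=\Lam\cap (\Lam'_{\Int})^{(R)}$; Lemma~\ref{lemParallelVolume} then yields $\abs{A_R}\le (R/r)^{d}\abs{\Lam'}= C_{d}\,\eta^{d}\abs{\Lam'}$, only polynomial in $\eta$.

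The core step is a triangle-inequality decomposition
\begin{equation*}
\norm{X_{T}-Y_{T}}_{\Lam'}\le \norm{X_{T}-X'_{T}}_{\Lam'}+\norm{X'_{T}-Y'_{T}}_{\Lam'}+\norm{Y'_{T}-Y_{T}}_{\Lam'},
\end{equation*}
where $X'_{t},Y'_{t}\in\Omega^{\tau,*}_{\Lam}$ are auxiliary chains that agree with $X_{t},Y_{t}$ on $\Lam\setminus A_R$ for all $t$ (by freezing the initial outside configuration) and evolve on $A_R$ under the identity coupling restricted to updates selected inside $A_R$. The first and third terms are bounded via Lemma~\ref{lem:infPropagationSingle} in the time-varying-boundary form of Remark~\ref{rem:BC}: since $X$ and $X'$ (resp.\ $Y$ and $Y'$) differ only through updates in $\Lam\setminus A_R$, the region of disagreement lies at distance $\ge R$ from $\Lam'$, so each outer term is at most $\abs{\Lam'}e^{-R/(4r)}$. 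The middle term is a comparison of two single-center dynamics on the smaller set $A_R$: OTM applied to $A_R$ with $T=\lfloor \eta n\rfloor$ steps corresponds to normalized time $\eta n/\abs{A_R}\ge \eta$, giving a bound $b\abs{A_R}e^{-c\eta}\le b C_{d}\eta^{d}\abs{\Lam'}e^{-c\eta}$, which is absorbed into $b'\abs{\Lam'}e^{-c'\eta}$ by slightly decreasing the rate.

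The main obstacle is that the auxiliary chains $X'_{t}$ and $Y'_{t}$ carry different boundary conditions on $A_R$ (induced by $X_{0}|_{\Lam\setminus A_R}$ vs.\ $Y_{0}|_{\Lam\setminus A_R}$), whereas OTM compares chains sharing a common boundary. I would bridge this with one further triangle-inequality step, inserting an intermediate chain $Z_{t}$ on $A_R$ with boundary inherited from $Y_{0}$ and initial data $X_{0}[A_R]$: the comparison $\norm{Z_{T}-Y'_{T}}_{\Lam'}$ is then a direct OTM estimate, while $\norm{X'_{T}-Z_{T}}_{\Lam'}$ is another instance of Lemma~\ref{lem:infPropagationSingle} with the set $A$ of boundary disagreements contained in $\Lam\setminus A_R$. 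The $\Omega^{*}$ hypothesis in the final sentence of the lemma statement is precisely what ensures that these auxiliary configurations remain admissible in the extended state space even when the grafted initial and boundary data do not together satisfy the hard-sphere packing constraint.
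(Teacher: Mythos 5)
Your overall framework matches the paper's: thicken $\Lam'$ to $A_R$ with $R\asymp \eta r 4^{d}$, introduce auxiliary chains localized to $A_R$, bound the approximation error via Lemma~\ref{lem:infPropagationSingle}, and bound the comparison of the localized chains via optimal temporal mixing (OTM). Your numerology for $R$ and the volume estimate $|A_R|\le (R/r)^d|\Lam'|$ also match.

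The key difference, however, creates a genuine gap. The paper sets the auxiliary chains to be \emph{empty} outside $(A_R)_{\mathrm{Int}}$, so that $X^R_t$ and $Y^R_t$ share the single boundary condition $\tau_R=\Lam\setminus(A_R)_{\mathrm{Int}}$, and both $X^R_0=X_0[A_R]$ and $Y^R_0=Y_0[A_R]$ automatically lie in $\Omega_{A_R}^{\tau_R}$ (since $X_0[A_R]$ is confined to $(A_R)_{\mathrm{Int}}$, the boundary condition $\tau_R$ is trivially respected). OTM then applies directly. You instead \emph{freeze} the exterior to $X_0$ (resp.\ $Y_0$), which forces $X'_t$ and $Y'_t$ to carry \emph{different} boundary conditions on $A_R$. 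You then bridge with a fourth chain $Z_t$ with boundary from $Y_0$ and initial configuration $X_0[A_R]$. But this hybrid $Z_0$ need not lie in $\Omega_{A_R}^{\tau_Z}$: a center of $X_0$ at distance slightly more than $r$ from $\partial(A_R)_{\mathrm{Int}}$ can lie within $2r$ of a center of $Y_0$ just outside $(A_R)_{\mathrm{Int}}$. So $Z_0$ is only in $\Omega^{\tau_Z,*}_{A_R}\setminus\Omega^{\tau_Z}_{A_R}$ in general, and the OTM hypothesis --- which is stated for instances of the dynamics on $\Omega^{\tau}_{\Lam}$ --- does not apply to the pair $(Z_t,Y'_t)$. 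The ``$\Omega^*$'' clause in the lemma statement does not patch this: it refers to the initial data $X_0,Y_0$ of the \emph{original} chains (and precisely requires $X_0[A_R],Y_0[A_R]\in\Omega_{A_R}$ to make the paper's empty-boundary construction valid); it does not license applying OTM to an auxiliary chain starting outside $\Omega_{A_R}^{\tau_Z}$. Proving OTM for $\Omega^*$ initial data is plausible given the proof of Theorem~\ref{mixing}, but it is a strengthened hypothesis beyond what the lemma assumes.

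A secondary omission: the localized chain is lazy (only about $\eta|A_R|$ of the $\lfloor\eta n\rfloor$ updates land in $A_R$), so invoking OTM requires controlling the number of effective updates, which the paper does via a Chernoff bound. Your phrase ``normalized time $\eta n/|A_R|\ge\eta$'' is an expectation statement; it needs a concentration estimate to convert into a worst-case bound of the form demanded by OTM. This is a minor fix, but it should be made explicit.
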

\begin{proof} 
Fix $\eta > e^{-2} 4^{-d-1}$ and let $R = \eta e^{2} r \cdot 4^{d+1} $.
Define  $A_{R}$  to be
\begin{equation*}
A_{R} =  \{x\in \Lam :  \mathrm{dist}(x,\Lam'_{\mathrm{Int}})\leq R\}.  
\end{equation*}
If $\abs{\Lam'}<1$, then the total variation distance is zero since
both $X_{t}[\Lam']$ and $Y_{t}[\Lam']$ are the empty set since no
spheres fit inside $\Lam'$. Hence we may assume that $|\Lam'|\geq 1$,
and our assumption on $\eta$ implies that
$|A_R| \ge |A_r| = |\Lam'| \geq 1$.
 
The proof proceeds by defining auxiliary Markov chains $X_{t}^{R}$ and $Y_{t}^{R}$ on
$A_{R}$ that imitate $X_{t}$ and $Y_{t}$ closely, and then
using the triangle inequality:
\begin{equation}
  \label{eq:Proj-TriangleSingle}
  \norm{X_{\lfloor \eta n \rfloor} - Y_{\lfloor \eta n \rfloor}}_{\Lam'} \leq
  \norm{X_{\lfloor \eta n \rfloor}-X^{R}_{\lfloor \eta n \rfloor}}_{\Lam'} +
  \norm{X_{\lfloor \eta n\rfloor}^{R}-Y_{\lfloor \eta n \rfloor}^{R}}_{\Lam'} + \norm{Y_{\lfloor \eta n \rfloor}^{R} - Y_{\lfloor \eta n \rfloor}}_{\Lam'}.
\end{equation}

The definition of $A_{R}$ will ensure that it is unlikely that
information can pass from outside $A_{R}$  to $\Lam'$, which will
ensure the first and third terms are small. The second term will be
handled by the optimal temporal mixing hypothesis.

In detail, we define the Markov chains $X_{t}^{R}$ and $Y_{t}^{R}$ to be  empty
 outside of $(A_R)_{\mathrm{Int}}$ 
for all $t$, and to agree with $X_{0}$ and $Y_{0}$ respectively
inside $(A_R)_{\mathrm{Int}}$ at $t=0$. The two chains have the same dynamics:
  \begin{itemize}
\item  Uniformly select an update point $x$ from $\Lam$.
\item If $x \notin A_R$, do nothing.
\item Otherwise perform an update of the chain, with the configuration
  outside $A_{R}$ held empty as a boundary condition. Formally, the
  boundary condition for this update is
  $\tau_R = \Lam \setminus (A_R)_{\mathrm{Int}}$.
\end{itemize}
$X_{t}^{R}$ and $Y_{t}^{R}$ are lazy dynamics on $A_{R}$: with
probability $1-|A_{R}|/n$ nothing occurs, otherwise an update on
$A_{R}$ is performed.

We couple $X_{t}^{R}$ with $X_{t}$ by a variant of the identity
coupling: if $X_{t}$ updates at a point outside $A_{R}$ then
$X_{t}^{R}$ does nothing, otherwise attempt the same update. The
projections of $X_{t}$ and $X_{t}^{R}$ to $\Lam'$ are both copies of
the hard sphere model on $\Lam'$ with boundary conditions and initial
conditions that only differ outside $(A_R)_{\mathrm{Int}}$, see
  the third bullet above. As a result, 
Lemma~\ref{lem:infPropagationSingle} and Remark~\ref{rem:BC} imply that
\begin{equation*}
  \norm{X_{\lfloor \eta n\rfloor }-X^{R}_{\lfloor \eta n \rfloor}}_{\Lam'} 
      \le |\Lam'| e^{-R/(4r)}
      = |\Lam'|e^{-\eta e^2 4^d}.
\end{equation*}
The application of Lemma~\ref{lem:infPropagationSingle} is valid by the
definition of $R$ and $A_{R}$, i.e., that
$\mathrm{dist}(\Lam'_{\mathrm{Int}},A_{R}^{c}) > R$. In particular,
this holds even if $X_0 \in \Omega_{\Lam}^{\tau,*}$.  Exactly the same
reasoning and bound apply to
$\norm{Y_{\lfloor \eta n\rfloor }^{R} - Y_{\lfloor \eta
    n\rfloor}}_{\Lam'}$.

As the configurations of $X_{t}^{R}$ and $Y_{t}^{R}$ agree outside
$A_{R}$ and $X_0[A_R],Y_0[A_R] \in \Omega_{A_R}$, the hypothesis of
optimal temporal mixing applies. Hence the second term of
\eqref{eq:Proj-TriangleSingle} is small provided the chain takes
enough steps. There is probability $|A_{R}|/n$ that the update point
lies in $A_{R}$.  So in $\eta n$ steps, we expect $\eta |A_{R}|$
updates to occur in $A_{R}$.  By a Chernoff bound at least
$\eta |A_R|/2$ updates occur in $A_{R}$ with probability at least
$1-e^{-\eta |A_R|/8} \ge 1- e^{-\eta/8}$ since $|A_R| \ge 1$.  This
gives
\begin{equation*}
  \norm{X_{\lfloor \eta n \rfloor}^{R}-Y_{\lfloor \eta n \rfloor}^{R}}_{\Lam'} \leq
  \norm{X_{\lfloor \eta n \rfloor}^{R}-Y_{\lfloor \eta n \rfloor}^{R}}_{A_{R}}
  \leq b\abs{A_{R}}e^{-c\eta/2} + 2e^{-\eta /8}.
\end{equation*}
The first inequality has used that the total variation distance is
weakly decreasing when projecting to subsets. For the second we have
applied the definition of optimal temporal mixing and used a union
bound to ensure both $X_{\lfloor \eta n \rfloor}^{R}$ and
$Y_{\lfloor \eta n \rfloor}^{R}$ have taken $\eta |A_R|/2$ steps.
Putting these bounds together with~\eqref{eq:Proj-TriangleSingle}, we
have
\begin{align*}
  \norm{X_{\lfloor \eta n \rfloor} - Y_{\lfloor \eta n \rfloor}}_{\Lam'} &\le 2 |\Lam'|e^{-\eta e^2 4^d}  + b\abs{A_{R}}e^{-c\eta/2} + 2e^{-\eta /8}  \\
  &\leq 2 |\Lam'|e^{-\eta e^2 4^d}  + b(R/r)^d e^{-c \eta/4}  |\Lam'| e^{-c\eta/4} + 2e^{-\eta /8}
\end{align*}
since Lemma~\ref{lemParallelVolume} implies $ |A_{R}| \le (R/r)^d |\Lam'| $.
 With
 \begin{equation*}
 b' =\sup_{\eta \ge e^{-2}4^{-d-1}}b (\eta e^2 4^{d+1})^{d}e^{-c\eta/4} +4 \quad \text{ and } \quad c' = \min \{ c/4,   1/8\},
 \end{equation*}
this proves the claim. 
\end{proof}

Using these lemmas we prove Theorem~\ref{thmSingleTempToSpatial}. Our
proof follows that of~\cite[Theorem~2.3]{dyer2004mixing}.

\begin{proof}[Proof of Theorem~\ref{thmSingleTempToSpatial}]  
  Fix $\lam$ and $d$, and suppose that the single-center dynamics for
  the hard sphere model on $\R^d$ at fugacity $\lam$ exhibits optimal
  temporal mixing with constants $b, c$.  Let $\Lam \subset \R^d$ be
  compact and measurable, and suppose $\tau, \tau'$ are two boundary
  conditions on $\Lam$.  Let $\Lam' \subset \Lam$ be measurable, and
  let $s =\mathrm{dist} (\tau \triangle \tau', \Lam'_{\mathrm{Int}} )$.
 
  Let $Z_t$ be a copy of the single-center dynamics with stationary
  distribution $\mu_{\Lam}^\tau$ and let $Z'_t$ be a copy of the
  dynamics with stationary distribution $\mu_{\Lam}^{\tau'}$, and take
  both initial conditions to be the same sample from
  $\mu_{\Lam}^{\tau'}$. In particular $Z'_t$ is distributed as
  $\mu_{\Lam}^{\tau'}$ for all $t$ (and thus
  $Z_t' \in \Omega_{\Lam}^{\tau'}$). On the other hand, we only
    know $Z_0 \in \Omega_{\Lam}^{\tau,*}$ since the initial condition
  might violate the boundary condition
  $\tau$.  
 
  We have, by the triangle inequality,
  \begin{equation*}
    \| \mu_{\Lam}^\tau - \mu_{\Lam}^{\tau'} \|_{\Lam'} =   \| \mu_{\Lam}^\tau - Z'_t \|_{\Lam'}   \le  \|  \mu_{\Lam}^\tau - Z_t  \|_{\Lam'} +   \| Z_t - Z'_t \|_{\Lam'} \,,
  \end{equation*}
  for any choice of $t$.  From Lemma~\ref{lem:Proj-MixingSingle}, we
  have projected optimal mixing, and so if we take
  \begin{equation*}
    t =  \left \lfloor  \frac{ s n}{e^{2} r \cdot 4^{d+1} } \right \rfloor \,,
  \end{equation*}
  we have
  \begin{equation*}
    \|  \mu_{\Lam}^\tau - Z_t  \|_{\Lam'}  \le b' |\Lam'| e^{- c' s/ (e^{2} r 4^{d+1}  )}  \,.
  \end{equation*}
  We can apply Lemma~\ref{lem:Proj-MixingSingle} even though
  $Z_0 \in \Omega_{\Lam}^{\tau,*}$ since $Z_0[A_R]\in\Omega_{A_R}$
  with $A_R$ as defined in the proof of
  Lemma~\ref{lem:Proj-MixingSingle}.

  The centers of $Z_t$ and $Z_t'$ outside of $\Lam'$ determine the
  boundary conditions of the projected chain restricted to
  $\Lam'$. The symmetric difference of these boundary conditions are
  contained in $(\tau \triangle \tau')^{(2r)}$.  Therefore, by
  Remark~\ref{rem:BC} our choice of $t$ allows us to apply
  Lemma~\ref{lem:infPropagationSingle}, which gives
  \begin{equation*}
    \| Z_t - Z'_t \|_{\Lam'} \le |\Lam'| e^{-(s-2r)/(4r)} \,.
  \end{equation*}

  Setting $\beta=b'+e^{1/2}$ and $\alpha = \min \{   c'/ (e^{2} r 4^{d+1}  )
  ,1/(4r) \}$ and putting these bounds together gives
  \begin{equation*}
    \| \mu_{\Lam}^\tau - \mu_{\Lam}^{\tau'} \|_{\Lam'} \le  b' |\Lam'|
    e^{- c' s/ (e^{2}r 4^{d+1}  )}  +  |\Lam'| e^{-s/(4r)}   \le
    \beta |\Lam'| e^{- \alpha s}\, .\qedhere
  \end{equation*}
\end{proof}

\subsection{From spatial to temporal mixing}
Here we will show that strong spatial mixing implies that the
heat-bath dynamics with radius $L \ge L_0(d,\alpha, \beta)$ exhibits
optimal temporal mixing (Theorem~\ref{thm:heatbathConverse}).  Along with
Theorem~\ref{thmHeatBathTemptoSpatial}, this shows that strong spatial
mixing and optimal temporal mixing of the heat bath dynamics are
essentially equivalent.

\begin{proof}[Proof of Theorem~\ref{thm:heatbathConverse}]
  Assume the hard-sphere model on $\R^d$ exhibits strong
  spatial mixing with constants $\alpha$ and $\beta$. We will prove
  optimal temporal mixing for the heat-bath dynamics with update
  radius $L = Kr$, for $K$ to be chosen large enough in the course of
  the proof.

  We construct a path coupling using Hamming distance.  That is,
  $D(X,Y) = | X \triangle Y|$, the number of centers in the symmetric
  difference of $X$ and $Y$. If $|\Lam|=n$, then at most $n$ centers
  can fit in a valid configuration, and so the diameter of
  $\Omega_\Lam^\tau$ under Hamming distance is at most $2 n$.

  Suppose $X_t$ and $Y_t$ are two copies of the radius-$L$ heat-bath
  chain on $\Omega_\Lam^\tau$, with $X_0 = Y_0 \cup \{u \}$.  Again we
  use an identity coupling to couple the chains: we choose the same
  update ball in each chain; if the boundary conditions agree, we make
  the same update.  If the boundary conditions disagree, then we will
  choose a specific coupling detailed below.
  
  Let $\Delta = D(X_1, Y_1) -D(X_0,Y_0)$ under this coupling.  If $x$
  is the random center of the update ball and $u \in B_L(x)$, then the
  boundary conditions agree and so with probability $1$, $X_1 = Y_1$,
  and so $\Delta= -1$.  This occurs with probability
  \begin{align*}
    \Pr[ u \in B_L(x)] &= \frac{|B_L(u)| }{| \Lam_{\mathrm{Int}}^{(L)}| } = \frac{K^d}{N }  \,,
  \end{align*}
where we set $N = | \Lam_{\mathrm{Int}}^{(L)}|$. 

If $u \notin B_{L+2r}(x)$, then again the boundary conditions of the update ball agree, and so with probability $1$ we will have $X_1 = Y_1 \cup \{u\}$ and so $\Delta=0$. 
 
Finally, if $u \in B_{L+2r}(x) \setminus B_{L}(x)$, the boundary
conditions of the update ball differ by the presence of $u$, and so
the Hamming distance may increase.  We bound the probability that $u$
is in this width $2r$ boundary of the update ball:
 \begin{align*}
   \Pr [ u \in B_{L+2r}(x) \setminus B_L(x)] &=  \frac{  | (B_{L+2r}(u) \cap \Lam^{(L)}) \setminus B_L(u)      |  }{ | \Lam^{(L)}_{\mathrm{Int}}|}     \\
   &\le \frac{| B_{L+2r}(u)
     \setminus B_L(u) | } { | \Lam^{(L)}_{\mathrm{Int}}| } \\
     &= \frac{ (K+2)^d - K^d }{ N }  \, .
\end{align*}
Next we bound the expected increase in Hamming distance in this case
under a specific coupling.

Fix $x \in \Lam_{\mathrm{Int}}^{(L)}$ so that
$u \in B_{L+2r}(x) \setminus B_{L}(x)$.  Let $\tau_X$ be the
boundary condition on $B_{L}(x)$ induced by $X_0$ and let $\tau_Y$ be
the boundary condition induced by $Y_0$.  In particular,
$\tau_X \triangle \tau_Y \subseteq B_{2r}(u)$.  Set 
$t=r(K/8d)^{1/d} -2r$, and let
$A = \{ y \in B_{L}(x) : \mathrm{dist}(y, u) \le t \}$ and
$\overline A = B_{L}(x) \setminus A$. We will bound the change in
  Hamming distance by considering $A$ and $\overline A$ separately;
  the coupling will be chosen to control the change on $\overline A$.

The increase in Hamming distance can be written as the sum of the
increase in Hamming distance restricted to spheres that intersect $A$
plus the increase in Hamming distance restricted to the configuration
in $\overline A$.  An upper bound on the increase in Hamming distance
for spheres intersecting $A$ is twice the maximum number of centers
possible in a valid configuration, which we can upper bound by
$2V_{2r+t }$.

We now turn to $\overline A$.
We can bound the total variation distance between
$\mu_{B_{L}(x)}^{\tau_X}$ and $\mu_{B_{L}(x)}^{\tau_Y}$ restricted
to $\overline A$ using the strong spatial mixing assumption:
\begin{align*}
 \| \mu_{B_{Kr}(x)}^{\tau_X} - \mu_{B_{Kr}(x)}^{\tau_Y}  \| _{\overline A} &\le \beta | \overline{A}^{(r)}|  e^{-\alpha \mathrm{dist} ( \tau_X \triangle \tau_Y,  \overline A ) }\\
 &\le \beta |\overline{A}^{(r)}|  e^{-\alpha \mathrm{dist} ( B_{2r}(u),  \overline A )} \\
 &\le \beta (K+1)^d e^{- \alpha (t-2r) }   \,.
 \end{align*}
 Therefore, there exists a coupling of $X_1, Y_1$ so that $X_1$ and
 $Y_1$ disagree within $\overline A$ with probability at most
 $\beta (K+1)^d e^{- \alpha (t-2r) }$. An upper bound on the increase
 in Hamming distance restricted to $\overline{A}$ is twice
 the maximum number of centers that can be placed in $\overline{A}^{(r)}$,
 which is $2(K+1)^d$.
 Under this coupling we can therefore bound the expected change in Hamming
 distance by
  \begin{equation*}
    \E \left [ \Delta \big| u \in B_{(K+2)r}(x) \setminus B_{Kr}(x) \right ] \le 2V_{2r+t} +   2\beta (K+1)^{2d}   e^{- \alpha (t-2r) }   \,.
\end{equation*}

 Putting this together yields that the expected change in Hamming distance is at most
 \begin{equation*}
\E \left [ \Delta \right ] \le -\frac{K^d }{N } + \frac{ (K+2)^d - K^d  }{  N }   \left( 2V_{2r+t} +   2\beta (K+1)^{2d} e^{- \alpha (t-2r) } \right)   \,.
\end{equation*}
 Now since  $2V_{t+r} = K/4d $ and $(K+2)^d -K^d \le 2d(K+2)^{d-1}$, we have
   \begin{equation*}
\E \left [ \Delta \right ] \le - \frac{1}{N} \left [ K^d -  2d(K+2)^{d-1} \left( \frac{K}{4d} +   2\beta (K+1)^{2d} e^{- \alpha (r (K/8d)^{1/d}-3r) } \right)   \right]  \,,
\end{equation*}
  and choosing $K$ large enough as a function of $d, \alpha, \beta$
  we can ensure that
 \begin{equation*}
\E[ \Delta] \le -\frac{K^{d}}{3} \frac{1}{N} \le -\frac{  1 } {3 n   }
\end{equation*}
since $N = | \Lam_{\mathrm{Int}}^{(L)}| \le K^d n $ by
Lemma~\ref{lemParallelVolume}.  Then combining this bound and the
diameter bound, Theorem~\ref{path coupling thm} gives optimal temporal
mixing.
\end{proof}

\section{Bounds on the critical density}
\label{secDensity}

In this section we prove Theorem~\ref{thmDensityBounds}; this requires
two preparatory results. Recall that 
\begin{equation*}
 \rho(\lam) = \liminf_{n \to \infty}  \frac{1}{n} \E_{Q_n,
   \lam} |\mathbf X | \,,
\end{equation*}
where $Q_n$ is the box of volume $n$ centered at the origin in $\R^d$.
We first give an easy lower bound on $\rho(\lam)$.  This is closely related to an inequality of Lieb~\cite{lieb1963new} that applies to the canonical ensemble. 
\begin{lemma}
  \label{lemEasybound}
For all $d$ and all $\lam \geq 0$,
  \begin{equation*}
    \rho(\lam) \ge \frac{\lam}{1+ 2^d \lam}  \,.
  \end{equation*}
\end{lemma}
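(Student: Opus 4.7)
The plan is to establish the identity
\begin{equation*}
\E_{\Lam,\lam}\abs{\mathbf{X}} = \lam \int_{\Lam_{\Int}} \pr[\mathrm{dist}(x,\mathbf{X})\geq 2r]\,dx
\end{equation*}
and then bound the expected blocked volume. The identity is the Georgii--Nguyen--Zessin (Papangelou) formula for the hard sphere model, where the conditional intensity is $\lam\cdot\mathbf{1}[\mathrm{dist}(x,\mathbf{X})\geq 2r]$ for $x\in\Lam_{\Int}$. I would derive it directly from the definition of $Z_{\Lam}(\lam)$ in~\eqref{eq:Z} by peeling one sphere out of the sum: write $\E\abs{\mathbf{X}} = Z_{\Lam}(\lam)^{-1}\sum_{k\geq 1} k\lam^{k}/k! \int \prod 1_{\norm{x_{i}-x_{j}}\geq 2r}\prod dx_{i}$, relabel one variable as $y$, and observe that the remaining $k-1$-fold integral, together with the indicators linking $y$ to the other centers, reassembles into $Z_{\Lam}(\lam)\cdot\pr[\mathrm{dist}(y,\mathbf{X})\geq 2r]$.

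Next, let $\Gam(\mathbf{X}) = \bigcup_{y\in\mathbf{X}} B_{2r}(y)$ denote the blocked set, so that $\pr[\mathrm{dist}(x,\mathbf{X})\geq 2r] = 1 - \pr[x\in\Gam(\mathbf{X})]$. By Fubini and the trivial union bound on the volume of $\Gam(\mathbf{X})$,
\begin{equation*}
\int_{\Lam_{\Int}} \pr[x\in\Gam(\mathbf{X})]\,dx = \E\abs{\Gam(\mathbf{X})\cap\Lam_{\Int}} \leq \E\sum_{y\in\mathbf{X}} \abs{B_{2r}(y)} = 2^{d}\E\abs{\mathbf{X}},
\end{equation*}
since each ball $B_{2r}(y)$ has volume $V_{2r} = 2^{d}V_{r} = 2^{d}$. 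Combining with the identity yields
\begin{equation*}
\E\abs{\mathbf{X}} \geq \lam\abs{\Lam_{\Int}} - \lam\cdot 2^{d}\E\abs{\mathbf{X}},
\end{equation*}
which rearranges to $\E\abs{\mathbf{X}} \geq \lam\abs{\Lam_{\Int}}/(1+2^{d}\lam)$.

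Finally, I would specialize to $\Lam = Q_{n}$. Since $Q_{n}$ is a cube of side $n^{1/d}$ and $Q_{n}\setminus (Q_{n})_{\Int}$ is a shell of width $r$, we have $\abs{(Q_{n})_{\Int}}/n \to 1$ as $n\to\infty$. Dividing by $n$ and taking the liminf gives $\rho(\lam)\geq \lam/(1+2^{d}\lam)$. The only mildly delicate point is verifying the GNZ-type identity directly from the combinatorial definition of the partition function, rather than quoting it from the theory of Gibbs point processes; the rest is essentially a union bound plus taking the thermodynamic limit.
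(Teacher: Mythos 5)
Your proposal is correct and follows essentially the same route as the paper's proof. The paper also bounds the expected blocked (non-free) volume via the union bound, giving exactly the same inequality $\rho_\Lam(\lam)\geq \frac{|\Lam_\Int|}{|\Lam|}\frac{\lam}{1+2^d\lam}$, before specializing to $Q_n$. The only cosmetic difference is that you derive the identity $\E_{\Lam,\lam}|\mathbf{X}|=\lam\int_{\Lam_\Int}\pr[\mathrm{dist}(x,\mathbf{X})\geq 2r]\,dx$ (equivalently, $\rho_\Lam=\lam F_\Lam$ in the paper's notation) from first principles by peeling a variable out of the partition function, whereas the paper simply cites this identity from \cite[Lemma 7]{jenssen2019hard}; your direct derivation is valid and makes the proof self-contained.
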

\begin{proof}
  Let $\rho_\Lam(\lam) = \frac{1}{|\Lam|} \E_{\Lam, \lam} | \mathbf X |$
  be the expected density of the hard sphere model on $\Lam$ with
  free boundary conditions, and let
  \begin{equation*}
    F_{\Lam}(\lam) = \frac{ \E_{\Lam,\lam}  | \{ y \in \Lam_{\mathrm{Int}} : \mathrm{dist}(y,
      \mathbf X) >2r \}| }{ |\Lam|} 
  \end{equation*}
  be the expected \textit{free volume} fraction of $\Lam$.  A
  short calculation gives the identity
  $\rho_{\Lam}(\lam) = \lam F_{\Lam}(\lam)$ for all bounded measurable
  $\Lam$ of positive volume~\cite[Lemma 7]{jenssen2019hard}.  Further,
  \begin{equation*}
    F_{\Lam}(\lam) \ge |\Lam_{\mathrm{Int}}|/|\Lam|- 2^d \rho_{\Lam}(\lam) \,,  
  \end{equation*}
  since each center in $\mathbf X$ can block at most volume
  $2^d$. With a little algebra, this implies
  \begin{equation*}
 \rho_{\Lam}(\lam) \geq  \frac{|\Lam_{\mathrm{Int}}|  }{|\Lam|   }\frac{\lam}{1+\lam 2^{d}}.  
  \end{equation*}
Applying this bound to $\Lam = Q_n$ and taking a limit gives the lemma.
\end{proof}

We will also require the following bound on $\rho(\lam)$.
\begin{theorem}[{\cite[Proof of Theorem~2]{jenssen2019hard}}]
\label{thmJJP}
For all $d \ge 2$ and all $\lam >0$, 
\[ \rho(\lam) \ge \inf_z \max \left \{ \lam e^{-z},  z 2^{-d} e^{-2\lam 3^{d/2}}  \right \}  \,. \]
In particular if $\lam = c 2^{-d}$, we have $\rho(\lam) \ge (1+o_d(1)) W(c) 2^{-d}$ where $W(\cdot)$ is the Lambert-W function, i.e. the inverse of $f(W) =  W e^W$. 
\end{theorem}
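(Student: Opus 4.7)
The plan is to establish two complementary lower bounds on $\rho(\lam)$ and combine them. The first, $\rho(\lam) \geq \lam e^{-z}$, follows from the identity $\rho(\lam) = \lam F(\lam)$ used in the proof of Lemma~\ref{lemEasybound}, together with a lower bound $F(\lam) \geq e^{-z}$. The second, $\rho(\lam) \geq z\, 2^{-d} e^{-2\lam 3^{d/2}}$, will come from an averaging argument at the intermediate geometric scale $\sqrt{3}\, r$. With both in hand, $\rho(\lam) \geq \max\{\lam e^{-z},\, z\, 2^{-d} e^{-2\lam 3^{d/2}}\}$ holds for every $z > 0$, and taking the infimum over $z$ yields the stated bound.

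For the first inequality, I would write
\[
F(\lam) = \frac{1}{|\Lam|} \int_{\Lam_\Int} \Pr_{\Lam,\lam}\!\left[\mathbf X \cap B_{2r}(x) = \emptyset\right] \, dx
\]
and bound the integrand below. Because the hard sphere process is stochastically dominated by a Poisson process of intensity $\lam$ (its Papangelou intensity is bounded by $\lam$), the probability that $B_{2r}(x)$ contains no center is at least the corresponding Poisson probability $e^{-\lam 2^d}$, giving $F(\lam) \geq e^{-z}$ with $z = \lam 2^d$ as a baseline. Sharpening this by iterating the identity $\rho = \lam F$ produces smaller valid choices of $z$, which is what allows the infimum in the theorem to yield a nontrivial bound.

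For the second inequality, I would use the Georgii--Nguyen--Zessin (Mecke) equation for the hard sphere process to relate $\E|\mathbf X|$ to the expected Lebesgue measure of the set of points $x \in \Lam_\Int$ whose insertion is compatible with the configuration inside the ball $B_{\sqrt{3}\,r}(x)$. The volume $V_{\sqrt{3}\, r} = 3^{d/2}$ sets the relevant geometric scale, while the prefactor $z\, 2^{-d}$ reflects the full exclusion volume $V_{2r} = 2^d$. A Poisson-domination argument for the event that two overlapping exclusion regions at the intermediate scale are simultaneously void of centers produces the factor $e^{-2\lam 3^{d/2}}$.

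With both bounds available, the asymptotic at $\lam = c\, 2^{-d}$ is a short optimization. Since $2\lam 3^{d/2} = 2c(3/4)^{d/2} = o_d(1)$, the second bound reduces to $z\, 2^{-d}(1 + o_d(1))$. Balancing $\lam e^{-z^*} = z^*\, 2^{-d}$ gives $z^* e^{z^*} = c(1 + o_d(1))$, so $z^* = W(c)(1 + o_d(1))$ and $\rho(\lam) \geq W(c)\, 2^{-d}(1 + o_d(1))$. The main obstacle is the second bound: obtaining the sharp exponent $2\lam 3^{d/2}$, rather than the weaker $\lam 2^d$ from a naive single exclusion-ball estimate, requires careful geometric accounting for how two overlapping exclusion regions at scale $\sqrt{3}\, r$ jointly contribute to the Poisson exclusion probability, and this combinatorial step is the delicate part of the argument.
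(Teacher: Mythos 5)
The paper cites this result to~\cite{jenssen2019hard} without reproducing a proof, so the comparison is against the argument in that reference. Your reconstruction inverts the quantifier structure and the error is fatal. You assert that ``$\rho(\lam) \geq \max\{\lam e^{-z},\, z\, 2^{-d} e^{-2\lam 3^{d/2}}\}$ holds for every $z > 0$'' and then take the infimum. But $\lam e^{-z}$ is decreasing in $z$ while $z\,2^{-d}e^{-2\lam 3^{d/2}}$ is linear and unbounded in $z$, so if the maximum were a lower bound for \emph{every} $z$ you would have $\rho(\lam)=\infty$. The $\inf_z \max$ in the statement is not obtained by proving both bounds for all $z$; it is obtained because the two bounds are shown to hold simultaneously at a single \emph{unknown} value, namely $z_0 = -\log F(\lam)$ where $F(\lam)$ is the expected free-volume fraction appearing in the proof of Lemma~\ref{lemEasybound}. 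At that $z_0$ the first inequality is tautological, $\rho(\lam) = \lam F(\lam) = \lam e^{-z_0}$, so no Poisson stochastic domination is needed; the domination argument you sketch only produces the one admissible value $z=\lam 2^d$, which is the wrong object. (Equivalently one can read the statement as a dichotomy, $\rho \ge \min\{\lam e^{-z}, z 2^{-d}e^{-2\lam 3^{d/2}}\}$ for every $z$, which gives the same saddle value $\sup_z\min = \inf_z\max$; your ``max for all $z$'' reading is the one option that cannot be right.)

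Once the structure is corrected, the entire content of the theorem sits in the second inequality at $z_0$: one must show $F(\lam) \ge \exp\!\bigl(-\rho(\lam)\, 2^d e^{2\lam 3^{d/2}}\bigr)$, i.e.\ a lower bound on the free volume in terms of the density itself. This is a two-point correlation estimate for the free volume, established in~\cite{jenssen2019hard} via a chaining argument over overlapping exclusion regions at the intermediate scale $\sqrt{3}\,r$ (whose ball volume is exactly $3^{d/2}$). Your sketch names the right geometric scale and the GNZ/Mecke equation as a plausible tool, but you do not supply the chaining step and you flag it yourself as ``the delicate part.'' Without that step, and with the quantifier reading inverted, the proof does not go through. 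Your final paragraph, deducing $\rho(c\,2^{-d}) \ge (1+o_d(1))W(c)\,2^{-d}$ by balancing $\lam e^{-z} = z\,2^{-d}$ and noting $2\lam 3^{d/2} = 2c(3/4)^{d/2} = o_d(1)$, is correct given the main inequality.
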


\begin{proof}[Proof of Theorem~\ref{thmDensityBounds}]
  To prove the first statement in Theorem~\ref{thmDensityBounds}, we
  combine Lemma~\ref{lemEasybound} and Theorem~\ref{thmGenThm} to get
  \begin{equation*}
    \rho_c \ge \frac{2^{1-d}}{1+ 2^d 2^{1-d}} =  \frac{2}{3 \cdot 2^d}   \, ,
  \end{equation*}
  
  To prove the second statement in Theorem~\ref{thmDensityBounds}, we
  use Theorem~\ref{thmJJP} and the bound  $\lam_c(d) \ge 2^{1-d}$, to obtain
  \begin{equation*}
    \rho_c(d) \ge (1+o_d(1))W(2) 2^{-d} \ge ( .8526+o_d(1))   \cdot 2^{-d}\,, 
  \end{equation*}
  as $d \to \infty$.
\end{proof}

\section*{Acknowledgements}
We thank Arnaud Marsiglietti for pointing us towards
Lemma~\ref{lemParallelVolume}.  We thank the anonymous referees for many helpful comments and suggestions.  TH supported in part by EPSRC grant EP/P003656/1. WP supported in part by NSF Career award DMS-1847451.  SP supported in part by NSF Graduate
Research Fellowship DGE-1650044.

\end{document}